\newcommand{\LL}{\mathcal{L}}
\newcommand{\A}{\mathcal{A}}
\newcommand{\B}{\mathcal{B}}
\newcommand{\N}{\mathcal{N}}
\newcommand{\doo}[2]
{\frac{\partial #1}{\partial #2}}
\newcommand{\lapl}[1]
{\Delta #1}
\newcommand{\RNum}[1]{\uppercase\expandafter{\romannumeral #1\relax}}
\newtheorem{Theorem}{Theorem}[section]
\newtheorem{Lemma}[Theorem]{Lemma}
\newtheorem{Proposition}[Theorem]{Proposition}
\theoremstyle{definition}
\newtheorem{assumption}[Theorem]{Assumption}
\newtheorem{remark}[Theorem]{Remark}
\begin{document}

\title[Robust Output Regulation of the Linearized Boussinesq]{Robust Output Regulation of the Linearized Boussinesq Equations with Boundary Control and Observation}

\thispagestyle{plain}

\author{Konsta Huhtala, Lassi Paunonen and Weiwei Hu}
\address{(K. Huhtala and L. Paunonen) Mathematics and Statistic, Faculty of Information Technology and Communication Sciences, Tampere University, PO.\ Box 692, 33101 Tampere, Finland}
\email{konsta.huhtala@tuni.fi, lassi.paunonen@tuni.fi}
\address{(W. Hu) Department of Mathematics, University of Georgia, Athens, GA 30602, USA}
\email{Weiwei.Hu@uga.edu}

\begin{abstract}
	We study temperature and velocity output tracking problem for a two-dimensional room model with the fluid dynamics governed by the linearized translated Boussinesq equations. Additionally, the room model includes finite-dimensional models for actuation and sensing dynamics, thus the complete model dynamics are governed by an ODE-PDE-ODE system.
	As the main result, we design a low-dimensional internal model based controller for robust output racking of the room model.
	Efficiency of the controller is demonstrated through a numerical example of velocity and temperature tracking.
\end{abstract}
\subjclass[2010]{%
93C05, 
93B52, 
35K40, 
35Q93
}
\keywords{Linear control systems, robust output regulation, partial differential
equations
\newline The research was supported by the Academy of Finland Grant
number 310489 held by L. Paunonen. L. Paunonen was funded by
the Academy of Finland Grant number 298182. W. Hu was partially
supported by the NSF grant DMS-1813570.} 

\maketitle

\section{Introduction}

We consider temperature and flow control for a two-dimensional room model. In the model, behavior of the fluid within the room is described by the Boussinesq equations. These equations couple the fluid flow dynamics given by the Navier--Stokes equations to the fluid temperature dynamics governed by the advection--diffusion equation, and they are commonly used for modeling non-isothermal flows, see e.g. \cite{Borggaard2012,Aulisa2016,Burns2016}.
In this paper, we consider the linearized Boussinesq equations.
As the main control problem, we study output tracking for the room model, where a mix of observations on the fluid temperature and velocity must converge to a desired reference trajectory over time, i.e.
\begin{equation}\label{Eq:convergence}
\Vert y(t)-y_{ref}(t)\Vert \to 0 \qquad \textrm{as} \quad t \to \infty,
\end{equation}
where $y(t) \in \mathbb{R}^p$ is the observation and $y_{ref}(t) \in \mathbb{R}^p$ is the reference output. The considered reference outputs are of the form
\begin{equation}\label{Eq:refsig}
y_{ref}(t) = a_0(t) + \sum_{i=1}^{q_s}a_i(t)\cos(\omega_i t) + b_i(t)\sin(\omega_i t),
\end{equation}
where $0 = \omega_0 < \omega_1 < \cdots < \omega_{q_s}$ are known frequencies and $a_i(t),b_i(t) \in \mathbb{R}^p$ are polynomial vectors with possibly unknown coefficients but known maximal degrees. As the main contribution of this paper, we design a finite-dimensional controller for output tracking of the room model with the room geometry depicted in Figure \ref{fig:Room}.
\begin{figure}[h]
	\begin{center}
		\centering
		\captionsetup{justification=centering}
		\includegraphics[width=5.5cm,height=5cm]{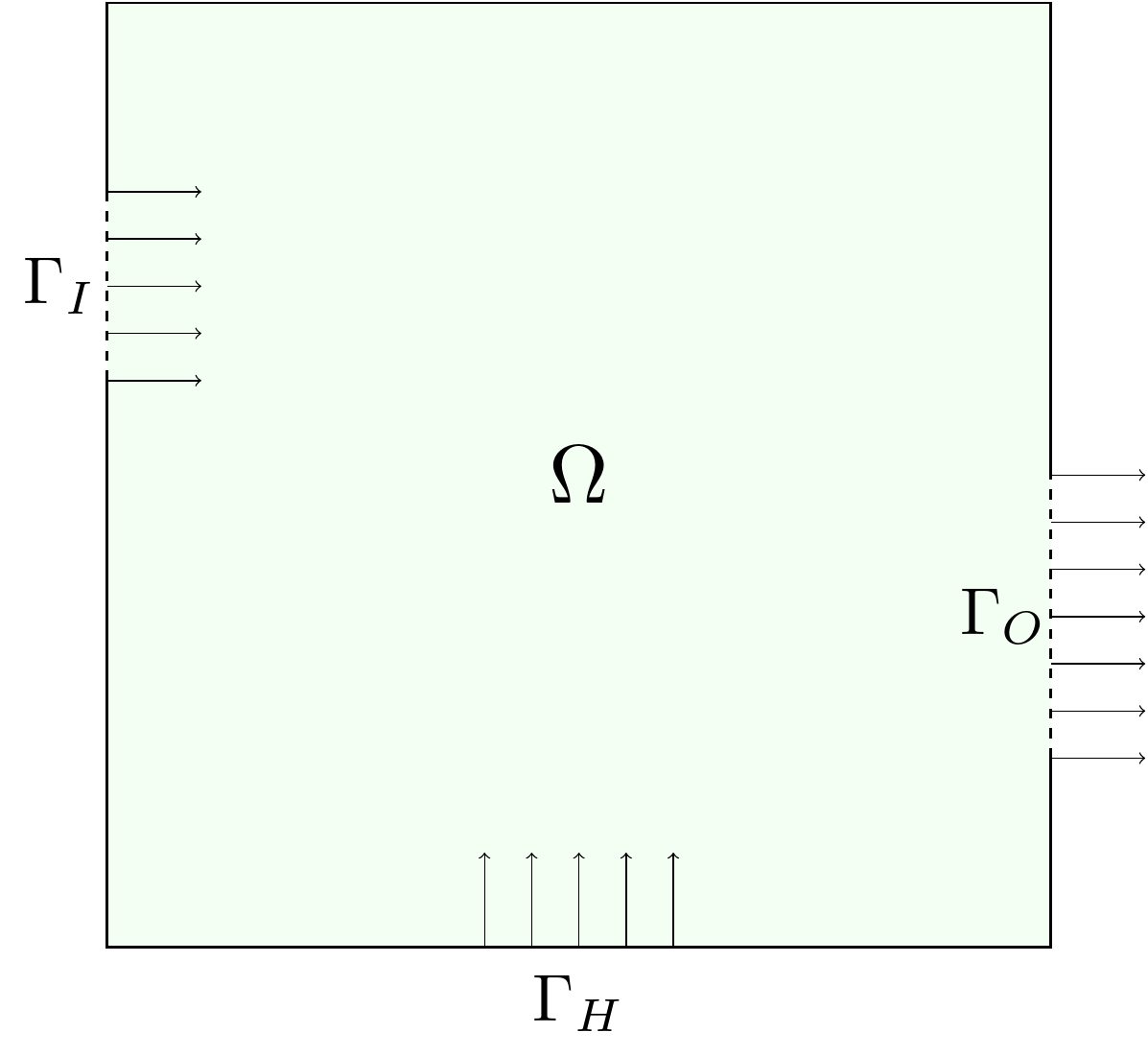}
		\caption{A room with the boundary regions of interest highlighted} 
		\label{fig:Room}
	\end{center}
\end{figure}

The fluid dynamics within the room in Figure \ref{fig:Room} are governed by the linearized translated Boussinesq equations. We focus on a control setup typical for rooms, where the physical control inputs act on the fluid near the boundary of the room, i.e. the walls, the floor or the roof, c.f. \cite{Borggaard2012,Burns2016}. In the model, the fluid flows into and out of the room through the boundary regions $\Gamma_I$ and $\Gamma_O$. Both the fluid velocity and the fluid temperature are controlled within $\Gamma_I$, c.f. \cite{Burns2016}. Additionally, the fluid temperature is controlled within $\Gamma_H$, but no velocity control is applied within $\Gamma_H$ and there is no fluid flow through this boundary section. Observations on the fluid are performed both within the boundary regions and inside the spatial domain. In addition to the fluid dynamics, the room model includes  finite-dimensional dynamical models for the \emph{actuators} and the \emph{sensors} related to the fluid control and observation, respectively. As such, the complete room model is an ODE-PDE-ODE model. Compared to a model with PDE dynamics only, the room model has more complex dynamics but the control and observation operations are bounded.
Additionally, increased model accuracy due to dynamic actuator modeling has been shown in \cite{Zimmer2003} for an acoustic model, and dynamic actuator modeling has been argued to be a more realistic approach to system modeling in general \cite{Burns2012a}. From now on, we will refer to the full ODE-PDE-ODE room model as the \emph{cascade system}.

We achieve the output convergence \eqref{Eq:convergence} for the room model by implementing a controller introduced in \cite{Paunonen2019}. The controller is based on the \emph{internal model principle}, see \cite{Francis1975,Davison1976,Paunonen2010}, and has several desirable properties. It can be used in control of unstable systems, which is essential for this paper due to the fact that the linearized Boussinesq equations may be unstable \cite{Burns2016} (depending on the room geometry and physical parameters). The controller does not require complete state information of the system but rather only uses the observation $y(t)$, and since the controller is based on a Galerkin approximation of the room model combined with model reduction, it is of low-order for fast computations. Finally, the controller is \emph{robust} in that it tolerates small system uncertainties and rejects disturbance signals of the form
\begin{equation}\label{Eq:distsig}
u_{d}(t) = c_0(t) + \sum_{i=1}^{q_s} c_i(t)\cos(\omega_i t) + d_i(t)\sin(\omega_i t),
\end{equation}
which can be applied either within the boundary or inside the spatial domain of the room.
Here $\omega_i$ are the same frequencies as in \eqref{Eq:refsig} and $c_i(t),d_i(t) \in \mathbb{R}^d$ are polynomial vectors with possibly unknown coefficients but known maximal degrees. For linear systems, also several alternative output tracking controllers have been designed. However, none of the other controller options meets all of the above criteria, which motivates our choice of the controller in \cite{Paunonen2019}. Notably, the control solutions in \cite{Byrnes2000,Deutscher2011,Xu2017} lack the robustness property of fault tolerance and disturbance rejection. At the same time, the robust controllers presented in \cite{Hamalainen2000,Rebarber2003,Hamalainen2010,Paunonen2016,Paunonen2019} are either designed for stable systems only or are infinite-dimensional.

Most of the previous work regarding control of the Boussinesq equations focuses on stabilization \cite{Wang2003,Burns2016,Hu2016,Ramaswamy2019}. Examples of output tracking without robustness and based on state feedback have been considered in \cite{Aulisa2016}. Additionally, robust output tracking for a simplified room model with only temperature dynamics and in-domain control and observation has been studied in \cite{Huhtala2021}.
Finally, addition of the actuator dynamics for classes of linear systems has previously been considered in \cite{Burns2012a,Burns2014a,Morris2018}.
In this work, we formulate the abstract system presentation for the cascade system, i.e. with temperature, velocity and ODE dynamics. To guarantee the output convergence \eqref{Eq:convergence}, we also study effects of the additional ODE dynamics on exponential stabilizability and detectability of the room model. Furthermore, we have to carefully select the approximation method for the cascade system during controller construction due to incompressibility of the velocity field.

The paper is organized as follows.
In Section \ref{Sec:system}, we first present the complete ODE-PDE-ODE room model. We formulate the cascade system as an abstract linear control system in Section \ref{ssec:abstractsys}. Section \ref{Ssec:StabDet} is devoted for stabilizability and detectability analysis of the cascade system in terms of us presenting sufficient conditions for these properties.
In Section \ref{Sec:ROR}, we couple the room model with an error feedback controller to guarantee the output convergence \eqref{Eq:convergence}. The controller is based on \cite{Paunonen2019} and we present the design process for the cascade system.
In Section \ref{Sec:example}, we present a numerical example of robust output tracking for the boundary controlled linearized Boussinesq equations with a mix of boundary and in-domain observations and including actuator and sensor dynamics. Finally, the paper is concluded in Section \ref{Sec:conclusion}.

We use the following notation. For a linear operator $A$, $D(A)$ and $\mathcal{N}(A)$ denote its domain and kernel, respectively. Furthermore, the spectrum of $A$ is denoted by $\sigma(A)$ and the resolvent set by $\rho(A)$. The set of bounded linear operators from $X$ to $Y$ is denoted by $\LL(X,Y)$. Finally, $\langle \cdot, \cdot \rangle_\Omega$ and $\langle \cdot, \cdot \rangle_\Gamma$ denote the $L^2$-inner product or duality pairing on the two-dimensional domain $\Omega$ and on the one-dimensional domain $\Gamma$, respectively.

\section{The Room Model}\label{Sec:system}

We consider a two-dimensional model of a room already depicted in Figure \ref{fig:Room} with interior $\Omega$ and boundary $\Gamma$. The room has two disjoint vents; an inlet $\Gamma_I$ and an outlet $\Gamma_O$. We denote walls of the room by $\Gamma_W = \Gamma \setminus (\Gamma_I \cup \Gamma_O)$, and assume ``no-slip'' velocity condition at the walls. Regarding temperature, we assume that there is a radiative heater located within $\Gamma_H \subset \Gamma_W$ and on $\Gamma_W \setminus \Gamma_H$ the temperature is fixed. In addition to the radiator, the fluid flow and fluid temperature within the room are affected by Robin boundary control within the inlet. Finally, the fluid is assumed to be stress-free with unforced heat flux within the outlet.

We next formulate the linearized Boussinessq equations around a steady state solution of the Boussinesq equations. The linearized Boussinesq equations are used to describe flow and temperature evolution of the fluid within the room. The PDE is coupled with abstract ODE systems governing the actuation and sensing dynamics. Finally, the coupled ODE-PDE-ODE system is reformulated in an abstract form for which we consider stabilizability and detectability properties. Note that later in the paper we will utilize the abstract formulation for controller implementation.

\subsection{The Linearized Translated Boussinesq Equations with Actuation and Sensing}
The Boussinesq equations for $\xi \in \Omega$ are given by
\begin{subequations}\label{Eq:Bouss}
	\begin{align}
	\dot{w}(\xi,t) &= \frac{1}{Re}\lapl{w}(\xi,t) - (w(\xi,t) \cdot \nabla )w(\xi,t) - \nabla q(\xi,t) \\
	& \quad + \hat{e}_2\frac{Gr}{Re^2}T(\xi,t) + f_{w}(\xi), \nonumber \\
	\dot{T}(\xi,t) &= \frac{1}{RePr} \lapl{T}(\xi,t) - w(\xi,t) \cdot \nabla T(\xi,t) + f_T(\xi), \\
	0 &= \nabla \cdot w(\xi,t), \qquad w(\xi,0) = w_0(\xi), \qquad T(\xi,0) = T_0(\xi),
	\end{align}
\end{subequations}
where $T$ is the temperature, $q$ is the pressure, and $w = [w_{1}, \ w_{2}]^T$ is the velocity of the fluid. Functions $f_w$ and $f_T$ represent a body force and a heat source, respectively, and $\hat{e}_2 = [0,\ 1]^T$ indicates the direction of buoyancy. Finally, $Re$ is the Reynold's number, $Gr$ is the Grashof number and $Pr$ is the Prandtl number. By first linearizing the above equations and then formulating them around a steady state solution $(w_{ss},q_{ss},T_{ss})$ of \eqref{Eq:Bouss} using the change of variables  $v(\xi,t) = w(\xi,t)-w_{ss}(\xi)$, $\theta(\xi,t) = T(\xi,t)-T_{ss}(\xi)$, $p(\xi,t) = q(\xi,t)-q_{ss}(\xi)$, we arrive at the linearized translated Boussinesq equations
\begin{subequations}\label{Eq:lintransbous}
	\begin{align}
	\dot{v}(\xi,t) &= \frac{1}{Re}\lapl{v}(\xi,t) -(w_{ss}(\xi)\cdot \nabla) v(\xi,t) - (v(\xi,t) \cdot \nabla)w_{ss}(\xi) \\
	 &\quad - \nabla p(\xi,t) + \hat{e}_2 \frac{Gr}{Re^2} \theta(\xi,t), \nonumber \\
	\dot{\theta}(\xi,t) &= \frac{1}{RePr} \lapl{\theta}(\xi,t) - w_{ss}(\xi) \cdot \nabla \theta(\xi,t) - v(\xi,t) \cdot \nabla T_{ss}(\xi), \\
	0 &= \nabla \cdot v(\xi,t), \qquad v(\xi,0) = v_0(\xi), \qquad \theta(\xi,0) = \theta_0(\xi).
	\end{align}	
The considered boundary conditions are
	\begin{align}
	&\big( \mathcal{T}(v(\xi,t),p(\xi,t)) \cdot n + \alpha_vv(\xi,t) \big)\big|_{\Gamma_I} = \begin{bmatrix}
	b_{v}(\xi) & b_{dv}(\xi)
	\end{bmatrix}\begin{bmatrix}
	u_{bv}(t) \\ u_{dv}(t)
	\end{bmatrix}, \label{Eq:boundarycondtv} \\
	&\bigg( \frac{1}{RePr}\doo{\theta}{n}(\xi,t) + \alpha_\theta \theta(\xi,t)  \bigg) \big|_{\Gamma_I} = \begin{bmatrix}
	b_{\theta_I}(\xi) & b_{d\theta_I}(\xi)
	\end{bmatrix} \begin{bmatrix}
	u_{b\theta_I}(t) \\ u_{d\theta_I}(t)
	\end{bmatrix},\label{Eq:boundarycondtthi} \\
	&\bigg(\frac{1}{RePr} \doo{\theta}{n}(\xi,t)\bigg)\big|_{\Gamma_H} = \begin{bmatrix}
	b_{\theta_H}(\xi) & b_{d\theta_H}(\xi)
	\end{bmatrix} \begin{bmatrix}
	u_{b\theta_H}(t) \\ u_{d\theta_H}(t)
	\end{bmatrix}, \label{Eq:boundarycondtthh} \\
	&\big( \mathcal{T}(v(\xi,t),p(\xi,t)) \cdot n \big)|_{\Gamma_O} = 0, \qquad v(\xi,t)|_{\Gamma_W} = 0, \label{Eq:boundarycondvr} \\
	&\doo{\theta}{n}(\xi,t)|_{\Gamma_O} = 0, \qquad \theta(\xi,t)|_{(\Gamma_W \setminus \Gamma_H)} = 0, \label{Eq:boundarycondtr}
	\end{align}
\end{subequations}
where $n$ denotes the unit outward normal vector of $\Gamma$, $\mathcal{T}$ is the fluid Cauchy stress tensor and $\alpha_v,\alpha_\theta \ge 0$ are constants, $u_b = [u_{bv}, \ u_{b\theta_I}, \ u_{b\theta_H}]$ are control inputs, $u_d = [u_{dv}, \ u_{d\theta_I}, \ u_{d\theta_H}]^T$ are disturbance inputs and the control and the disturbance inputs are applied via the shape functions $b_{v}$, $b_{\theta_I}$, $b_{\theta_H}$, $b_{dv}$, $b_{d\theta_I}$ and $b_{d\theta_H}$.
The inputs $u_b(t)$ are not directly generated by the controller, but are rather given as the output of the finite-dimensional actuator
\begin{subequations}\label{Eq:actuator}
	\begin{align}
	\dot{x}_a(t) &= A_ax_a(t) + B_au(t), \qquad x_a(0) = x_{a0} \in \mathbb{R}^{n_a}, \\
	u_b(t) &= C_ax_a(t),
	\end{align}
\end{subequations}
which takes as its input the control signal $u(t)$ generated by the controller.

We are mainly interested in two types of observations. These are weighted temperature or velocity averages either over a two-dimensional domain $\Omega_C \subset \Omega$, given by
\begin{subequations}\label{Eq:boussobs}
\begin{equation}\label{Eq:gammaobs}
y_{\Omega}(t) = \bigg\langle c_\Omega(\xi),\begin{bmatrix}
v(\xi,t) &  \theta(\xi,t)
\end{bmatrix}^T \bigg\rangle_{\Omega_C},
\end{equation}
or over a one-dimensional domain $\Gamma_C \subset \Gamma$, given by
\begin{equation}\label{Eq:omegaobs}
y_\Gamma(t) = \bigg\langle c_\Gamma(\xi),\begin{bmatrix}
v(\xi,t) &  \theta(\xi,t)
\end{bmatrix}^T \bigg\rangle_{\Gamma_C},
\end{equation}
\end{subequations}
and we denote by $y_b$ the observation of interest consisting of a combination of the two types. Note that one may include several observations of one type with the restriction that we need to increase the number of inputs $u(t)$ accordingly to at least match the number of observations, c.f. Assumption \ref{ass:invariant} in Section \ref{Sec:ROR}. These additional inputs are included in \eqref{Eq:boundarycondtv}-\eqref{Eq:boundarycondtthh} by considering vector valued $u_{bv}$, $u_{b\theta_I}$, $u_{b_{\theta_H}}$, $b_{v}$, $b_{\theta_I}$ and $b_{\theta_H}$.
Just as in the case of system input, the system output is also processed by a finite-dimensional system, the sensor
\begin{subequations}\label{Eq:sensor}
	\begin{align}
	\dot{x}_s(t) &= A_sx_s(t) + B_s y_b(t), \qquad x_s(0) = x_{s0} \in \mathbb{R}^{n_s}, \\
	y(t) &= C_s x_s(t)
	\end{align}
\end{subequations}
with the observation $y(t)$. The complete plant thus consists of the linearized translated Boussinesq equations \eqref{Eq:lintransbous} coupled with the actuator \eqref{Eq:actuator} via the input $u_b$ and with the sensor \eqref{Eq:sensor} via the output $y_b$. As already mentioned, we refer to the system \eqref{Eq:lintransbous}-\eqref{Eq:sensor} as the cascade system. Figure \ref{fig:Contscheme} depicts the control scheme consisting of the cascade system and an \emph{error feedback controller}.
\begin{figure}[h!]
	\begin{center}
		\centering
		\captionsetup{justification=centering}
		\includegraphics[scale=1.5]{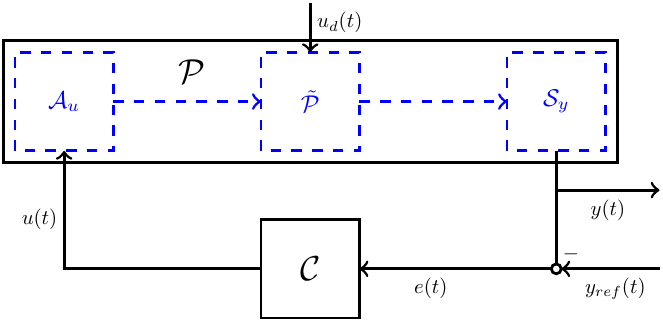}
		\caption{A closed-loop control scheme with an actuator $\mathcal{A}_u$, a plant $\tilde{\mathcal{P}}$, a sensor $\mathcal{S}_y$, the cascade system $\mathcal{P}$ and a controller $\mathcal{C}$}
		\label{fig:Contscheme}
	\end{center}
\end{figure}

The control goal is considered for the observation $y(t)$ of the sensor, which we want to converge exponentially to a prescribed reference trajectory $y_{ref}(t)$ of the form \eqref{Eq:refsig} despite the disturbance signal $u_d(t)$ given by \eqref{Eq:distsig}, i.e. for some $M_r,\omega_r > 0$ it should hold that
\begin{equation}\label{Eq:desiredtracking}
	\Vert y(t) - y_{ref}(t) \Vert \le M_re^{-\omega_rt}P_0.  
\end{equation} 
Here $P_0$ is determined by the initial states of the linearized translated Boussinesq equations, the actuator, the sensor and the controller and the coefficients $a_i(t)$, $b_i(t)$, $c_i(t)$ and $d_i(t)$ of the reference signal \eqref{Eq:refsig} and the disturbance signal \eqref{Eq:distsig}.

\subsection{Abstract Formulation of the Control System}\label{ssec:abstractsys}
The controller to be implemented achieves output tracking for a class of abstract linear systems, which motivates us to present the cascade system as one. The system dynamics operator will be defined using a weak formulation of the cascade system. We then follow up with the formulation of operators related to \emph{boundary control system} representation of the cascade system, a presentation choice natural in the presence of boundary system inputs \eqref{Eq:boundarycondtv}-\eqref{Eq:boundarycondtthh}.

To prepare for the formulations, we define the spaces
\begin{subequations}
	\begin{align*}
	X_v &= \big\{v \in (L^2(\Omega))^2 \big| \nabla \cdot v = 0, \  (v \cdot n) |_{\Gamma_W} = 0 \big\}, \\
	X_{b} &= X_v \times L^2(\Omega), \\
	X_\Gamma &= (L^2(\Gamma_I))^2 \times L^2(\Gamma_I) \times  L^2(\Gamma_H), 
	\\
	H_v &= \big\{ v \in (H^1(\Omega))^2 \big|\ \nabla \cdot v = 0, \  v|_{\Gamma_W} = 0 \big\}, \\	
	H_{\theta} &= \big\{ \theta \in H^1(\Omega) \big|\ \theta|_{(\Gamma_W\setminus \Gamma_H)} = 0 \big\}, \\
	H_{b} &= H_v \times H_\theta
	\end{align*}
	concerning the Boussinesq equations and the spaces
	\begin{align}
	&X = X_b \times \mathbb{R}^{n_a} \times \mathbb{R}^{n_s}, \label{Eq:statespace} \\
	&H = H_b \times \mathbb{R}^{n_a} \times \mathbb{R}^{n_s}
	\end{align}
\end{subequations}
concerning the cascade system.
Furthermore, for all $x = [x_b, \ x_a, \ x_s]^T \in X$, where $x_b = [v, \ \theta]^T$, we define the norms
\begin{subequations}\label{Eq:norms}
\begin{align}
&\Vert x\Vert_X^2 = \Vert x_b\Vert_{X_b}^2 + \Vert x_a\Vert_{\mathbb{R}^{n_a}}^2 + \Vert x_s\Vert_{\mathbb{R}^{n_s}}^2, \\
&\Vert x\Vert_H^2= \Vert x_b\Vert_{H_b}^2 + \Vert x_a\Vert_{\mathbb{R}^{n_a}}^2 + \Vert x_s\Vert_{\mathbb{R}^{n_s}}^2,
\end{align}
\end{subequations}
and denote the input space $U = \mathbb{R}^m$, the output space $Y = \mathbb{R}^p$ and the disturbance space $U_d = \mathbb{R}^d$.

The presented observations \eqref{Eq:boussobs} are not the only possible choices, and before focusing on the system as a whole we present an assumption characterizing the class of suitable observations
\begin{equation}\label{Eq:BoussObs}
y_b(t) = C_bx_b(t)
\end{equation}
on the linearized Bousinesq equations.
\begin{assumption}\label{ass:admobs}
	The observation operator satisfies $C_b \in \LL(H_b,Y_b)$ for some $Y_b \coloneqq \mathbb{R}^{p_b}$.
\end{assumption}

\begin{Lemma}\label{lemma:C}
	For system \eqref{Eq:lintransbous}, both the observation $y_\Omega = \langle c_\Omega,x_b \rangle_{\Omega_C}$ and $y_\Gamma = \langle c_\Gamma,x_b \rangle_{\Gamma_C}$ in \eqref{Eq:boussobs} with $c_\Omega \in (L^2(\Omega_C))^2\times L^2(\Omega_C)$, $c_\Gamma \in (L^2(\Gamma_C))^2\times L^2(\Gamma_C)$ satisfy Assumption \ref{ass:admobs}.
\end{Lemma}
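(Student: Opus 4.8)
The plan is to verify directly that each observation defines a bounded linear functional on $H_b$, so that collecting finitely many such functionals yields an operator $C_b \in \LL(H_b,Y_b)$ with $Y_b = \mathbb{R}^{p_b}$. Linearity in $x_b$ is immediate from the bilinearity of the inner products, so the entire content of the claim reduces to establishing a boundedness estimate of the form $|y| \le M\Vert x_b\Vert_{H_b}$ for each of the two observation types.

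For the in-domain observation $y_\Omega = \langle c_\Omega, x_b\rangle_{\Omega_C}$, I would simply invoke the Cauchy--Schwarz inequality on $L^2(\Omega_C)$. Writing $c_\Omega = [c_{\Omega,v},\ c_{\Omega,\theta}]$ and $x_b = [v,\ \theta]^T$ and estimating componentwise gives $|y_\Omega| \le \Vert c_\Omega\Vert_{(L^2(\Omega_C))^2\times L^2(\Omega_C)}\,\Vert x_b\Vert_{(L^2(\Omega_C))^2\times L^2(\Omega_C)}$. Since $\Omega_C \subset \Omega$, restriction to $\Omega_C$ does not increase the $L^2(\Omega)$-norm, and since every component of $x_b \in H_b$ lies in $H^1(\Omega)$, the continuous embedding $H^1(\Omega)\hookrightarrow L^2(\Omega)$ yields $\Vert x_b\Vert_{X_b} \le \Vert x_b\Vert_{H_b}$. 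Combining these bounds gives the desired estimate with $M = \Vert c_\Omega\Vert_{(L^2(\Omega_C))^2\times L^2(\Omega_C)}$.

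The one-dimensional observation $y_\Gamma = \langle c_\Gamma, x_b\rangle_{\Gamma_C}$ is the step that requires genuine work, because it involves the boundary trace of $x_b$ rather than its interior values. The key tool is the trace theorem: for a sufficiently regular (Lipschitz) domain the trace operator $\gamma\colon H^1(\Omega) \to H^{1/2}(\Gamma) \hookrightarrow L^2(\Gamma)$ is bounded, so that $\Vert \gamma u\Vert_{L^2(\Gamma_C)} \le \Vert \gamma u\Vert_{L^2(\Gamma)} \le C_\gamma\Vert u\Vert_{H^1(\Omega)}$ for every $u \in H^1(\Omega)$. Applying this componentwise to $v \in H_v$ and $\theta \in H_\theta$ and then Cauchy--Schwarz on $L^2(\Gamma_C)$ gives $|y_\Gamma| \le C_\gamma\Vert c_\Gamma\Vert_{(L^2(\Gamma_C))^2\times L^2(\Gamma_C)}\,\Vert x_b\Vert_{H_b}$, which is the required bound.

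The main obstacle, and indeed the only nontrivial point, is the boundary observation, where one must ensure the trace theorem is applicable; this is where the regularity of the room geometry enters, as a Lipschitz (for instance polygonal) boundary guarantees continuity of the trace map into $L^2(\Gamma)$. Once this is in place both estimates are routine. Finally, since finitely many bounded linear functionals assemble into a single bounded operator, the combined observation operator $C_b$ satisfies $C_b \in \LL(H_b,Y_b)$, which is precisely Assumption \ref{ass:admobs}.
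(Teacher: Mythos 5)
Your proposal is correct and takes essentially the same route as the paper's proof: Cauchy--Schwarz gives boundedness of the in-domain observation (indeed on all of $X_b$, hence on $H_b$), and the continuity of the trace operator from $H^1(\Omega)$ into $L^2(\Gamma)$ combined with Cauchy--Schwarz on $L^2(\Gamma_C)$ gives the boundary estimate. The additional points you spell out --- linearity, the Lipschitz regularity of the polygonal boundary justifying the trace theorem, and assembling finitely many bounded functionals into $C_b \in \LL(H_b,Y_b)$ --- are exactly what the paper leaves implicit.
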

\begin{proof}
Clearly $\langle c_\Omega, \cdot \rangle_{\Omega_C} \in \LL(X_b,\mathbb{R})$. Due to properties of the trace operator, we have
\begin{align*}
\langle c_\Gamma,x_b \rangle_{\Gamma_C} \le \Vert c_\Gamma \Vert_{L^2(\Gamma_C)}\Vert x_b \Vert_{L^2(\Gamma_C)} \le k \Vert x_b \Vert_{H^{1/2}(\Omega)},
\end{align*}
thus $\langle c_\Gamma, \cdot \rangle_{\Gamma_C} \in \LL(H_b,\mathbb{R})$.
\end{proof}
\noindent Existence and uniqueness of steady state solutions for the Boussinesq equations are outside the scope of this work. For the following analysis of the cascade system, we assume that a weak steady state solution
\begin{equation*}
(w_{ss},q_{ss},T_{ss}) \in H_v \times L^2(\Omega) \times H_\theta,
\end{equation*}
for the Boussinesq equations \eqref{Eq:Bouss} exists.
Discussion on existence and uniqueness of the steady state solutions can be found in e.g. \cite{Kim2012}.

As the first step towards abstract formulation of the cascade system, we construct the system dynamics operator $A$ via a weak formulation of the cascade system and verify that it generates a strongly continuous semigroup on $X$. To that end, we define the bilinear and trilinear forms
\begin{subequations}\label{Eq:bilin}
	\begin{align}
	&a_v(v,\psi) = \frac{2}{Re}\langle \epsilon (v),\epsilon(\psi) \rangle_{\Omega} + \alpha_v\langle v,\psi  \rangle_{\Gamma_I} \qquad \forall v,\psi \in H_v, \label{Eq:Stokesbilin} \\
	&a_\theta(\theta,\phi) = \frac{1}{RePr}\langle \nabla \theta,\nabla \phi\rangle_{\Omega} + \alpha_\theta\langle \theta,\phi \rangle_{\Gamma_I} \qquad \forall \theta,\phi \in H_\theta, \label{Eq:Diffusionbilin} \\
	&b_v(v_1,v_2,\psi) = \langle (v_1 \cdot \nabla)v_2,\psi \rangle_{\Omega} \qquad \forall v_1,v_2,\psi \in H_v, \\ 
	&b_\theta(v,\theta,\phi)= \langle v \cdot \nabla \theta,\phi \rangle_{\Omega} \qquad \forall v \in X_v,\ \forall \theta,\phi \in H_\theta,\\
	&b_0(\theta,\psi) = \bigg\langle \hat{e}_2 \frac{Gr}{Re^2}\theta,\psi \bigg\rangle_{\Omega} \qquad \forall \theta \in L^2(\Omega), \ \forall \psi \in (L^2(\Omega))^2.
	\end{align}
\end{subequations}
Here
\begin{align*}
\epsilon(v) = \frac{1}{2}\big(\nabla v + (\nabla v)^T\big) \qquad \forall v \in (H^1(\Omega))^2,
\end{align*}
thus the Cauchy stress tensor is given by
\begin{equation*}
	\mathcal{T}(v,p) = \frac{2}{Re}\epsilon(v) - pI.
\end{equation*}
Note that $0 = -a_\theta(\theta,\phi)$ corresponds to a weak formulation of the stationary diffusion equation for the temperature subject to \eqref{Eq:boundarycondtthi}, \eqref{Eq:boundarycondtthh} and \eqref{Eq:boundarycondtr} with zero control and disturbance, i.e. when
\begin{equation*}
	\bigg(\frac{1}{RePr}\doo{\theta}{n} + \alpha_\theta \theta\bigg)\big|_{\Gamma_I} = 0, \qquad \bigg(\frac{1}{RePr}\doo{\theta}{n}\bigg)\big|_{\Gamma_H} = 0.
\end{equation*}
Similarly, due to Stokes formula and incompressibility, $0 = -a_v(v,\psi)$ corresponds to a weak formulation of the stationary Stokes equation subject to \eqref{Eq:boundarycondtv} and \eqref{Eq:boundarycondvr} with zero control and disturbance, i.e. when
\begin{equation*}
	\big(\mathcal{T}(v,p)\cdot n + a_vv \big)\big|_{\Gamma_I} = 0.
\end{equation*}

Consider the cascade system \eqref{Eq:lintransbous}-\eqref{Eq:sensor} subject to a constant boundary disturbance signal $u_d' = [u_{dv}',\ u_{d\theta_I}',\ u_{d\theta_H}']^T$ and denote $g_{dv} = b_vu_{bv}'$, $g_{d\theta_I} = b_{\theta_I}u_{d\theta_I}'$, $g_{d\theta_H} = b_{\theta_H}u_{d\theta_H}'$.
Now the boundary conditions \eqref{Eq:boundarycondtv}-\eqref{Eq:boundarycondtthh} for the cascade system are
\begin{align*}
\big(\mathcal{T}(v,p)\cdot n \big)\big|_{\Gamma_I} &= -a_vv\big|_{\Gamma_I} + b_vC_{a_v}x_a  + g_{d_v}, \\
\bigg(\frac{1}{RePr}\doo{\theta}{n}\bigg)\big|_{\Gamma_I} &= - \alpha_\theta \theta\big|_{\Gamma_I} + b_{\theta_I}C_{a_{\theta_I}}x_a + g_{d_{\theta_I}}, \\ \bigg(\frac{1}{RePr}\doo{\theta}{n}\bigg)\big|_{\Gamma_H} &= b_{\theta_H}C_{a_{\theta_H}}x_a + g_{d_{\theta_H}},
\end{align*}
where $C_{a_v}$, $C_{a_{\theta_I}}$ and $C_{a_{\theta_H}}$ are obtained from $C_a = [C_{a_v}, \ C_{a_{\theta_I}}, \ C_{a_{\theta_H}}]^T$.
A weak formulation for a steady state solution of the cascade system subject to $u_d'$ and a constant control input $u'$ is now given by
\begin{align*}
0 &=  - a_v(v,\psi) - b_v(v,w_{ss},\psi) - b_v(w_{ss},v,\psi) + b_0(\theta,\psi) + \langle \mathcal{T}(v,p)\cdot n,\psi \rangle_{\Gamma_I} \\ & \quad +\alpha_v\langle v,\psi \rangle_{\Gamma_I} \\
&= - a_v(v,\psi) - b_v(v,w_{ss},\psi) - b_v(w_{ss},v,\psi) + b_0(\theta,\psi)
\\ & \quad + \langle b_{v} C_{a_v} x_a + g_{dv},\psi \rangle_{\Gamma_I} \qquad \forall \psi \in H_v,\\
0 &= -a_\theta(\theta,\phi) - b_{\theta}(w_{ss},\theta,\phi) - b_{\theta}(v,T_{ss},\phi) + \frac{1}{RePr}\big\langle \doo{\theta}{n},\phi \big\rangle_{\Gamma_I \cup \Gamma_H}
\\ &\quad + \alpha_\theta \langle \theta, \phi \rangle_{\Gamma_I} \\
&= -a_\theta(\theta,\phi) - b_{\theta}(w_{ss},\theta,\phi) - b_{\theta}(v,T_{ss},\phi) + \langle b_{\theta_I}C_{\theta_I}x_a + g_{d\theta_I}, \phi \rangle_{\Gamma_I}  
\\ &\quad+ \langle b_{\theta_H}C_{\theta_H}x_a + g_{d\theta_H} , \phi \rangle_{\Gamma_H} \qquad  \forall \phi \in H_\theta, \\
0 &= \langle A_ax_a,\psi_a \rangle_{\mathbb{R}^{n_a}} + \langle B_a u', \psi_a \rangle_{\mathbb{R}^{n_a}} \qquad \forall \psi_a \in \mathbb{R}^{n_a}, \\
0 &= \langle A_sx_s,\psi_s \rangle_{\mathbb{R}^{n_s}} + \big\langle B_sC_b\begin{bmatrix}
v \\ \theta
\end{bmatrix},\psi_s \big\rangle_{\mathbb{R}^{n_s}} \qquad \forall \psi_s \in \mathbb{R}^{n_s}.
\end{align*}
Motivated by the weak formulation, we define for $u_d' = 0$ and $u'=0$ the bilinear form
\begin{align}\label{Eq:bilina0}
	&a_0(\Phi,\Psi) \\
	&= a_0((v,\theta,x_a,x_s),(\psi,\phi,\psi_a,\psi_s))\nonumber \\
	&= a_v(v,\psi) + a_\theta(\theta,\phi) + b_v(v,w_{ss},\psi) + b_v(w_{ss},v,\psi) + b_\theta(w_{ss},\theta,\phi) \nonumber
	\\ & \quad + b_\theta(v,T_{ss},\phi) -b_0(\theta,\psi) - \langle b_v C_{a_v}x_{a},\psi	\rangle_{\Gamma_I} 
	- \langle b_{\theta_I} C_{a_{\theta_I}}x_{a},\phi \rangle_{\Gamma_I} \nonumber
	\\ & \quad - \langle b_{\theta_H}C_{a_{\theta_H}}x_{a},\phi \rangle_{\Gamma_H}  -\big\langle B_sC_b\begin{bmatrix}
	v & \theta
	\end{bmatrix}^T,\psi_s \big\rangle_{\mathbb{R}^{n_s}} - \langle A_ax_a,\psi_a \rangle_{\mathbb{R}^{n_a}} \nonumber \\
	& \quad- \langle A_sx_s,\psi_s \rangle_{\mathbb{R}^{n_s}} \qquad \forall \Psi \in H \nonumber
\end{align}
and more generally for $u'=0$ and some $u_d'\in \mathbb{R}^d$ the bilinear form
\begin{align}\label{Eq:bilinag}
	a_g(\Phi,\Psi) 
	&= a_0((v,\theta,x_a,x_s),(\psi,\phi,\psi_a,\psi_s)) - \langle g_{dv}, \psi \rangle_{\Gamma_I} - \langle g_{d\theta_I},\phi \rangle_{\Gamma_I}  \\ &\quad - \langle g_{d\theta_H},\phi \rangle_{\Gamma_H}. \nonumber
\end{align}
Using the bilinear form $a_0(\cdot,\cdot)$, we define the linear operator $A$ by
\begin{align}\label{Eq:opA}
	\langle Ax,\Psi \rangle_X &= -a_0(x,\Psi),  \\	 
	D(A) &= \big\{x \in H \big|\ \forall \Psi \in H, \ \Psi \to a_0(x,\Psi) \textrm{ is $X$-continuous} \big\}. \nonumber
\end{align}
We note that the geometry of $\Omega$ and the presence of mixed boundary conditions reduce regularity of the solutions of \eqref{Eq:lintransbous} so that $D(A) \not\subset(H^2(\Omega))^2 \times H^2(\Omega) \times \mathbb{R}^{n_a} \times \mathbb{R}^{n_s}$, c.f. \cite{Mazya2009,Burns2016,He2018}.

The following semigroup generation result is not only needed for the abstract system formulation but also the coercivity and boundedness results for the bilinear form will be utilized for the controller implementation to achieve output tracking, c.f. \cite{Paunonen2019}. Note that similar results for both the Boussinesq equations and the Navier--Stokes equations without additional ODE dynamics have been presented in multiple papers, see e.g. \cite{Nguyen2015,Burns2016,He2018}.
\begin{Theorem}\label{thm:bilin}
	Operator $A$ is the generator of an analytic semigroup on $X$ and the bilinear form $a_0(\cdot,\cdot)$ is $H$-bounded and $H$-coercive, i.e. $H$ can be continuously and densely embedded in $X$ and there exist $c,\lambda,\gamma > 0$ such that for all $\Phi,\Psi \in H$
	\begin{align*}
	\vert a_0(\Phi,\Psi) \vert &\le c \Vert \Phi \Vert_H \Vert \Psi \Vert_H, \\
	a_0(\Phi,\Phi) &\ge \gamma \Vert \Phi \Vert_H^2 - \lambda \Vert \Phi \Vert_X^2.
	\end{align*}
\end{Theorem}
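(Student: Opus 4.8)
The plan is to recognize Theorem~\ref{thm:bilin} as an instance of the classical generation theory for operators defined through bounded and coercive sesquilinear forms on a Gelfand triple. Concretely, I would first confirm that $H$ is continuously and densely embedded in $X$, so that $(H,X,H^\ast)$ forms a Gelfand triple: continuity of the embedding is immediate from comparing the two norms in~\eqref{Eq:norms}, while density follows from the standard density of $H_v$ in $X_v$ and of $H_\theta$ in $L^2(\Omega)$ together with the fact that the actuator and sensor components are finite-dimensional and identical in $H$ and $X$. Once the boundedness and coercivity of $a_0(\cdot,\cdot)$ are in hand, the operator $A$ defined in~\eqref{Eq:opA} is precisely the operator associated with the form, and the form-generation theorem then yields that $A$ generates an analytic semigroup on $X$ (cf.\ the analogous PDE-only statements in \cite{Burns2016,He2018}). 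Thus the whole proof reduces to the two inequalities.

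For the boundedness estimate $\vert a_0(\Phi,\Psi)\vert \le c\Vert\Phi\Vert_H\Vert\Psi\Vert_H$ I would proceed term by term. The principal parts $a_v(v,\psi)$ and $a_\theta(\theta,\phi)$ are bounded on $H_v$ and $H_\theta$ by Cauchy--Schwarz, with the boundary contributions over $\Gamma_I$ controlled through the trace theorem. The convection terms are bilinear in the unknowns with the fixed steady state $(w_{ss},T_{ss}) \in H_v \times H_\theta$ frozen; using the two-dimensional Sobolev embedding $H^1(\Omega)\hookrightarrow L^4(\Omega)$ and Hölder's inequality, each of $b_v(v,w_{ss},\psi)$, $b_v(w_{ss},v,\psi)$, $b_\theta(w_{ss},\theta,\phi)$ and $b_\theta(v,T_{ss},\phi)$ is bounded by $C\Vert\Phi\Vert_H\Vert\Psi\Vert_H$ with a constant depending on $\Vert w_{ss}\Vert_{H_v}$ and $\Vert T_{ss}\Vert_{H_\theta}$. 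The buoyancy term $b_0(\theta,\psi)$ is bounded directly by the $X$-norm, the coupling terms carrying $x_a$ and $x_s$ are handled by the trace theorem and the boundedness of $C_a$, $A_a$, $A_s$ and of $C_b$ (Assumption~\ref{ass:admobs}), and the purely finite-dimensional pairings are trivially bounded.

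The coercivity estimate $a_0(\Phi,\Phi) \ge \gamma\Vert\Phi\Vert_H^2 - \lambda\Vert\Phi\Vert_X^2$ is where the real work lies, and I expect it to be the main obstacle. The strategy is to extract the full $H$-coercivity from the two diagonal principal parts and to absorb every remaining term into $\epsilon\Vert\Phi\Vert_H^2 + C_\epsilon\Vert\Phi\Vert_X^2$ for small $\epsilon$. For the Stokes block, Korn's inequality applied to $v \in H_v$ (vanishing on $\Gamma_W$) gives $\frac{2}{Re}\Vert\epsilon(v)\Vert_\Omega^2 \ge c_1\Vert v\Vert_{H^1}^2$, and for the diffusion block the Poincaré inequality on $H_\theta$ (using $\theta|_{\Gamma_W\setminus\Gamma_H}=0$) gives $\frac{1}{RePr}\Vert\nabla\theta\Vert_\Omega^2 \ge c_2\Vert\theta\Vert_{H^1}^2$. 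The genuinely delicate terms are the convection contributions. The transport term $b_v(w_{ss},v,v)$ is skew-symmetric up to a boundary flux, so integration by parts together with $\nabla\cdot w_{ss}=0$ and $v|_{\Gamma_W}=0$ reduces it to an integral over $\Gamma_I\cup\Gamma_O$ bounded by $\Vert v\Vert_{L^2(\Gamma)}^2$, hence by $\epsilon\Vert v\Vert_{H^1}^2 + C_\epsilon\Vert v\Vert_{L^2}^2$ via the interpolation trace inequality and Young's inequality. The remaining terms $b_v(v,w_{ss},v)$ and $b_\theta(v,T_{ss},\theta)$ are estimated through the Ladyzhenskaya inequality $\Vert v\Vert_{L^4}^2 \le C\Vert v\Vert_{H^1}\Vert v\Vert_{L^2}$, and Young's inequality again splits each into an absorbable $\epsilon\Vert\cdot\Vert_{H^1}^2$ part and a $C_\epsilon\Vert\cdot\Vert_{L^2}^2$ part. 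Finally, the buoyancy coupling and the actuator/sensor coupling terms are bounded by $\epsilon\Vert\Phi\Vert_H^2 + C_\epsilon\Vert\Phi\Vert_X^2$, while the finite-dimensional diagonal blocks $-\langle A_ax_a,x_a\rangle$ and $-\langle A_sx_s,x_s\rangle$ are absorbed entirely into the $-\lambda\Vert\Phi\Vert_X^2$ term, since on those components the $H$- and $X$-norms coincide. Choosing $\epsilon$ small enough that the absorbed $H$-norm contributions do not exhaust the coercivity gained from Korn and Poincaré, and then $\lambda$ large enough to dominate all collected $X$-norm remainders, yields the Gårding inequality and, together with the generation theorem, completes the proof.
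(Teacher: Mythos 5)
Your proposal follows essentially the same route as the paper's proof: diagonal coercivity from Korn's and Poincar\'e's inequalities, skew-symmetry and integration by parts for the transport terms, Ladyzhenskaya's inequality plus Young's inequality to split the remaining convection, buoyancy and actuator/sensor coupling terms into absorbable $\epsilon\Vert\Phi\Vert_H^2 + C_\epsilon\Vert\Phi\Vert_X^2$ pieces (with the couplings handled via the trace theorem and Assumption~\ref{ass:admobs}), and the classical form-generation theorem for the analytic semigroup (the paper cites \cite{Banks1997}). The single cosmetic imprecision is your bound of the boundary flux $\langle w_{ss}\cdot n, \vert v\vert^2\rangle_{\Gamma}$ by $\Vert v\Vert_{L^2(\Gamma)}^2$ --- since $w_{ss}\cdot n$ lies only in $H^{1/2}(\Gamma)$, not $L^\infty(\Gamma)$, one needs a slightly finer pairing (the paper uses the $H^{-1/2}$--$H^{1/2}$ duality followed by Ladyzhenskaya), but this is a trivial fix that leaves your absorbable structure intact.
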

\begin{proof}
	Throughout the proof, we denote by $c$ a generic positive constant which may have a different value for each occurrence.
	We start by considering the terms $a_\theta(\cdot,\cdot)$ and $a_v(\cdot,\cdot)$. Now properties of the trace operator imply
	\begin{equation*}
		0 \le \alpha_\theta \langle \theta,\theta \rangle_{\Gamma_I} \le c \Vert \theta \Vert_{H^1}^2,
	\end{equation*}
	thus using Poincare's inequality we get for $\theta \in H_\theta$ and a constant $c_\theta > 0$
	\begin{equation}\label{Eq:diffcoer}
		a_\theta(\theta,\theta) = \frac{1}{RePr}\langle \nabla \theta, \nabla \theta \rangle_{\Omega} + \alpha_\theta \langle \theta,\theta \rangle_{\Gamma_I} \ge c_\theta \Vert \theta \Vert_{H^1}^2, 
	\end{equation}
	i.e. $a_\theta(\cdot,\cdot)$ is $H_\theta$-coercive. Since
	\begin{align}\label{Eq:diffbounded}
		|a_\theta(\theta,\phi)| &\le \bigg|\frac{1}{RePr}\langle \nabla \theta,\nabla \phi \rangle_{\Omega} \bigg| + |\alpha_\theta \langle \theta,\phi \rangle_{\Gamma_I}| \\
		&\le \frac{1}{RePr} \Vert \theta \Vert_{H^1} \Vert \phi \Vert_{H^1} + c \Vert \theta \Vert_{H^1} \Vert \phi \Vert_{H^1}, \nonumber
	\end{align}
	$a_\theta(\cdot,\cdot)$ is also $H_\theta$-bounded. 
	
	Regarding $a_v(\cdot,\cdot)$, it similarly holds that
	\begin{equation*}
		0 \le \alpha_v\langle v,v\rangle_{\Gamma_I} \le c \Vert v \Vert_{H^1}^2,
	\end{equation*}
	and the norm $\Vert \epsilon(\cdot) \Vert_{L^2}$ is equivalent to the norm $\Vert \cdot \Vert_{H^1}$ through Korn's and Poincare's inequalities. Now for $v \in H_v$ and a constant $c_v > 0$
	\begin{equation}\label{Eq:Stokescoer}
		a_v(v,v) = \frac{2}{Re} \langle \epsilon(v)
		,\epsilon(v) \rangle_{\Omega} + \alpha_v \langle v,v \rangle_{\Gamma_I} \ge \frac{2}{Re}\Vert \epsilon(v) \Vert_{L^2}^2 \ge c_v \Vert v \Vert_{H^1}^2,
	\end{equation}
	thus $a_v(\cdot,\cdot)$ is $H_v$-coercive. Since additionally
	\begin{align}\label{Eq:Stokesbdd}
		|a_v(v,\psi)| &\le \bigg| \frac{2}{Re}\langle \epsilon(v),\epsilon(\psi) \rangle_{\Omega}\bigg| + |\alpha_v \langle v,\psi \rangle_{\Gamma_I}|	\\
		&\le c \big( \Vert v \Vert_{H^1} \Vert \psi \Vert_{H^1} +  \Vert v \Vert_{H^1} \Vert \psi \Vert_{H^1} \big), \nonumber
	\end{align}
	$a_v(\cdot,\cdot)$ is also $H_v$-bounded. Combining \eqref{Eq:diffcoer}-\eqref{Eq:Stokesbdd}, we have that there exist constants $c_1,\gamma_1 > 0$ such that for all $\phi_b,\psi_b \in H_b$ the bilinear form
	\begin{equation*}
		a_1(\psi_b,\phi_b) = a_1((v,\theta),(\psi,\phi)) \coloneqq a_v(v,\psi) + a_\theta(\theta,\phi)
	\end{equation*} satisfies
		\begin{subequations}\label{Eq:NSD}
		\begin{align}
		|a_1(\phi_b,\psi_b)| &\le c_1\Vert \phi_b \Vert_{H_b} \Vert \psi_b \Vert_{H_b}, \label{Eq:NSDbound} \\
		a_1(\phi_b,\phi_b) &\ge \gamma_1\Vert \phi_b \Vert_{H_b}^2. \label{Eq:NSDcoer}
		\end{align}
	\end{subequations}
	
	The rest of the proof now consists of presenting estimates for the norms of the remaining terms of $a_0(\cdot,\cdot)$. 
	We immediately have that	
	\begin{align}
		&|b_0(\theta,\psi)| \le c \Vert\theta\Vert_{L^2}\Vert \psi \Vert_{X_v},\label{Eq:Xbounded1} \\
		&|\langle A_ax_a,\psi_a\rangle_{\mathbb{R}^{n_a}}| \le c \Vert x_a \Vert_{\mathbb{R}^{n_a}} \Vert \psi_a \Vert_{\mathbb{R}^{n_a}}, \\
		&|\langle A_sx_s,\psi_s\rangle_{\mathbb{R}^{n_s}}| \le c \Vert x_s \Vert_{\mathbb{R}^{n_s}} \Vert \psi_s \Vert_{\mathbb{R}^{n_s}}. \label{Eq:Xbounded3}
	\end{align}
Regarding the form $b_\theta(\cdot,\cdot,\cdot)$, by Sobolev embeddings, $L^2$-duality of $H^{1/2}$ and $H^{-1/2}$ and Ladyzhenskaya's inequality
	\begin{align}\label{Eq:bthcoer1}
		&|b_\theta(v,T_{ss}, \theta)| = |\langle v \cdot \nabla T_{ss}, \theta \rangle_{\Omega}|\\
		&\le |\langle vT_{ss},\nabla \theta \rangle_{\Omega}| + |\langle v \cdot n,T_{ss}\theta \rangle_{\Gamma}| \nonumber \\ 
		&\le  \Vert vT_{ss} \Vert_{L^2} \Vert \nabla \theta \Vert_{L^2} + c\Vert v \Vert_{H^1} \Vert T_{ss}\theta \Vert_{L^2} \nonumber \\
		&\le \Vert v \Vert_{L^4} \Vert T_{ss} \Vert_{L^4} \Vert \nabla \theta \Vert_{L^2} + c \Vert v \Vert_{H^1} \Vert T_{ss} \Vert_{L^4} \Vert \theta \Vert_{L^4} \nonumber \\
		&\le c \Vert T_{ss} \Vert_{H^1} \big( \Vert v \Vert_{L^2}^{1/2} \Vert \nabla v \Vert_{L^2}^{1/2} \Vert \nabla \theta \Vert_{L^2} + \Vert v \Vert_{H^1} \Vert \theta \Vert_{L^2}^{1/2} \Vert \nabla \theta \Vert_{L^2}^{1/2} \big). \nonumber
	\end{align}
	Similarly, we have
	\begin{align}\label{Eq:bthcoer2}
		|b_\theta(w_{ss},\theta,\theta)| &= \frac{1}{2}\langle w_{ss}, \nabla\theta^2 \rangle_{\Omega} = \frac{1}{2}\langle w_{ss} \cdot n , \theta^2 \rangle_{\Gamma} \le c\Vert w_{ss} \Vert_{H^1} \Vert \theta^2 \Vert_{L^2}\\
		&
		= c\Vert w_{ss} \Vert_{H^1}\Vert \theta \Vert_{L^4}^2 \le c \Vert \theta \Vert_{L^2} \Vert \nabla \theta \Vert_{L^2}. \nonumber
	\end{align}
	Furthermore,
	\begin{align}\label{Eq:bthbounded}
		|b_\theta(v,\theta,\phi)| &\le |\langle v,\nabla(\theta \phi) \rangle_{\Omega}| + |\langle v \theta , \nabla \phi \rangle_{\Omega}| \\
		&= |\langle v \cdot n, \theta \phi \rangle_{\Gamma}| + |\langle v \theta , \nabla\phi \rangle_{\Omega}| \nonumber\\ 
		&\le c \Vert v \Vert_{H^1} \Vert \theta \Vert_{H^1} \Vert \phi \Vert_{H^1}. \nonumber
	\end{align}
	
	Regarding the form $b_v(\cdot,\cdot,\cdot)$, we again use $L^2$-duality of $H^{1/2}$ and $H^{-1/2}$, Sobolev embeddings  and Ladyzhenskaya's inequality to form the estimates. Now
	\begin{align}\label{Eq:bvcoer1}
		|b_v(w_{ss},v,v)| &= |\langle (w_{ss} \cdot \nabla) v,v \rangle_{\Omega}| \\ 
		&\le	|\langle w_{ss},(v \cdot \nabla)v \rangle_{\Omega}| + |\langle w_{ss}\cdot n, v \cdot v \rangle_{\Gamma}| \nonumber \\
		& \le c \big( \Vert w_{ss} \Vert_{L^4} \Vert v \Vert_{L^4} \Vert \nabla v \Vert_{L^2} + \Vert w_{ss} \Vert_{H^1} \Vert v \Vert_{L^4}^2 \big) \nonumber \\ 
		&\le c\big( \Vert v \Vert_{L^2}^{1/2} \Vert \nabla v \Vert_{L^2}^{3/2} + \Vert v \Vert_{L^2} \Vert \nabla v \Vert_{L^2} \big) \nonumber
	\end{align}
	and
	\begin{align}\label{Eq:bvcoer2}
	|b_v(v,w_{ss},v)| &= |\langle (v \cdot \nabla) w_{ss},v \rangle_{\Omega}| \\
	&\le \Vert v \Vert_{L^4}^2\Vert \nabla w_{ss} \Vert_{L^2} \nonumber \\ 
	&\le c \Vert w_{ss} \Vert_{H^1} \Vert v \Vert_{L^2}\Vert \nabla v \Vert_{L^2}. \nonumber
	\end{align}
	Additionally,
	\begin{align}\label{Eq:bvbounded}
	|b_v(v_1,v_2,\psi)| &\le |\langle v_1,(v_2 \cdot \nabla)\psi \rangle_\Omega| + |\langle v_1 \cdot n,v_2 \cdot \psi \rangle_\Gamma|  \\
	&\le c\Vert v_1 \Vert_{L^4} \Vert v_2 \Vert_{L^4} \Vert \nabla \psi \Vert_{L^2} + c \Vert v_1 \Vert_{H^1} \Vert v_2 \cdot \psi \Vert_{L^2} \nonumber \\
	&\le c \big( \Vert v_1 \Vert_{H^1}\Vert v_2 \Vert_{H^1}\Vert \psi \Vert_{H^1} + \Vert v_1 \Vert_{H^1} \Vert v_2 \Vert_{L^4} \Vert \psi \Vert_{L^4} \big) \nonumber \\
	&\le c\Vert v_1 \Vert_{H^1}\Vert v_2 \Vert_{H^1}\Vert \psi \Vert_{H^1}. \nonumber
	\end{align}	
		
	Finally, properties of the trace operator together with Assumption \ref{ass:admobs} and duality imply
	\begin{align}
		&|\langle b_t C_a \psi_a,\psi_b \rangle_{\Gamma}| \le c \Vert \psi_a \Vert_{\mathbb{R}^{n_a}} \Vert\psi_b \Vert_{H_b}, \label{Eq:Bbbound} \\
		&|\langle B_sC_b\psi_b, \psi_s \rangle_{\mathbb{R}^{n_s}}| \le c \Vert \phi_s \Vert_{\mathbb{R}^{n_s}} \Vert \psi_b \Vert_{H_b}, \label{Eq:Cbbound}
	\end{align}
	where $b_t \coloneqq [b_v, \ b_{\theta_I}, \ b_{\theta_H}]$. Recalling the norm definitions \eqref{Eq:norms}, the equations \eqref{Eq:NSDbound}, \eqref{Eq:Xbounded1}-\eqref{Eq:Xbounded3}, \eqref{Eq:bthbounded} and \eqref{Eq:bvbounded}-\eqref{Eq:Cbbound} together imply $H$-boundedness of $a_0(\cdot,\cdot)$. $H$-coercivity of $a_0(\cdot,\cdot)$ follows from \eqref{Eq:NSDcoer} after applying Young's inequality to \eqref{Eq:bthcoer1}, \eqref{Eq:bthcoer2}, \eqref{Eq:bvcoer1}, \eqref{Eq:bvcoer2}, \eqref{Eq:Bbbound} and \eqref{Eq:Cbbound}. Finally, $H$-coercivity and $H$-boundedness of $a_0(\cdot,\cdot)$ imply generation of an analytic semigroup on $X$, see e.g. \cite{Banks1997}.
\end{proof}

To formulate the cascade system as an abstract boundary control system in the sense of \cite[Ch. 3.3]{Curtain1995}, we next define the related operators. In what follows $\mathbb{P}$ denotes the Leray projector as defined in \cite[Lemma 2.2]{Nguyen2015}, which is used to eliminate the pressure term while imposing incompressibility. Define the operators
\begin{align*}
	\A\begin{bmatrix}
	v \\ \theta \\ x_a \\ x_s
	\end{bmatrix} &= \begin{bmatrix}
		\mathbb{P}\bigg(\frac{1}{Re}\lapl{v} - (w_{ss} \cdot \nabla) v - (v \cdot \nabla)w_{ss} + \frac{Gr}{Re^2}\hat{e}_2  \theta \bigg) \\
		\frac{1}{RePr}\lapl{\theta} - w_{ss}\cdot \nabla \theta - v \cdot \nabla T_{ss} \\
		A_ax_a \\
		A_sx_s + B_sC_b[v, \ \theta]^T
	\end{bmatrix}:D(\A) \to X,  \\
	B_{\Gamma_u} &=\begin{bmatrix}
	b_v & 0 & 0 \\
	0 & b_{\theta_I} & 0 \\
	0 & 0 & b_{\theta_H}
	\end{bmatrix}:\mathbb{R}^{m_b} \to X_\Gamma, \\ B_{\Gamma_{u_d}} &= \begin{bmatrix}
	b_{dv} & 0 & 0 \\
	0 & b_{d\theta_I} & 0 \\
	0 & 0 & b_{d\theta_H}
	\end{bmatrix}:U_d \to X_\Gamma, \qquad
	B_\Gamma = \begin{bmatrix}
	B_{\Gamma_u} & B_{\Gamma_{u_d}}
	\end{bmatrix}.
\end{align*}
Operator $\A$ coincides with $A$ in $D(A)$ but has a larger domain due to relaxed boundary conditions within the disturbed parts of the boundary. That is, noting that Neumann trace of $H_b$ functions is in $(H^{-1/2}(\Gamma))^2 \times H^{-1/2}(\Gamma)$ and recalling the definition of $a_g(\cdot,\cdot)$ in \eqref{Eq:bilinag}, the domain is given by
\begin{align*}
	D(\A) &= \big\{ x \in H \big| \ \exists g_v \in (H^{-1/2}(\Gamma_I))^{2}, \ \exists g_{\theta_I} \in H^{-1/2}(\Gamma_I), \\& \qquad \exists g_{\theta_H} \in H^{-1/2}(\Gamma_H): 
	\forall \Psi \in H,
	 \Psi \to a_g(x,\Psi) \textrm{ is $X$-continuous} \big\}.
\end{align*}

Corresponding to the control and disturbance boundary conditions \eqref{Eq:boundarycondtv}-\eqref{Eq:boundarycondtthh}, we define the operator
\begin{equation*}
		\tilde{\B}_b\begin{bmatrix}
	v \\ \theta \\ p
	\end{bmatrix} = \begin{bmatrix}
	\big( \mathcal{T}(v,p) \cdot n + \alpha_vv \big)\big|_{\Gamma_I} \\
	\big(\frac{1}{RePr}\doo{\theta}{n} + \alpha_\theta \theta\big)\big|_{\Gamma_I} \\
	\big(\frac{1}{RePr}\doo{\theta}{n}\big)\big|_{\Gamma_H}
	\end{bmatrix}:D(\tilde{\B}_b) \subset X_b \times L^2(\Omega) \to X_\Gamma.
\end{equation*}
The pressure $p$ is uniquely defined by the velocity $v$, see \cite{Badra2012}, thus there exists operator $\B_b$ such that
\begin{equation*}
	\B_b\begin{bmatrix}
	v \\
	 \theta
	\end{bmatrix} = \tilde{\B}_b\begin{bmatrix}
	v \\ \theta \\ p
	\end{bmatrix} \qquad \forall \begin{bmatrix}
	v \\ \theta \\ p
	\end{bmatrix} \in D(\tilde{\B}_b), \qquad D(\B_b) = \big\{(v,\theta) \in D(\tilde{\B}_b)\big\}.
\end{equation*}
To match the state variable $x \in X$ of the cascade system, we finally define the operator
\begin{equation*}
		\B = \begin{bmatrix}
	\B_b & 0 & 0
	\end{bmatrix}: D(\B) = D(\B_b) \times \mathbb{R}^{n_a} \times \mathbb{R}^{n_s} \subset X \to X_\Gamma.
\end{equation*}

Since $A = \A|_{\N(\B-B_{\Gamma_u}[0, \ C_a, \ 0]^T)}$ generates an analytic semigroup on $X$ by Theorem \ref{thm:bilin} and $\B$ is onto $X_\Gamma$, cf. \cite{Burns2016}, by defining
\begin{equation}\label{Eq:B}
B = \begin{bmatrix}
0_{X_b} & B_a & 0_{\mathbb{R}^{n_s}}\end{bmatrix}^T \in \LL(U,X)
\end{equation}
we have that the cascade system \eqref{Eq:lintransbous}-\eqref{Eq:sensor} corresponds to the abstract boundary control system
\begin{subequations}\label{Eq:BCSA}
\begin{align}
\dot{x}(t) &= \A x(t) + Bu(t), \\
\B x(t) &= B_\Gamma \begin{bmatrix}
C_ax_a(t) \\ u_d(t)
\end{bmatrix}
\end{align}
on $X$ with the (boundary) input space $X_\Gamma$ in the sense of \cite[Ch. 3.3]{Curtain1995}. The system observation is given by
\begin{equation}\label{Eq:C}
	y(t) = Cx(t), \qquad C = [0_{X_b}, \ 0_{\mathbb{R}^{n_a}}, \ C_s]\in \LL(X,Y).
\end{equation}
\end{subequations}
Note that the control and observation operators of the boundary control system are bounded and the disturbance $B_{\Gamma_{u_d}}u_d$ is the only boundary input of the boundary control system. The boundary control system formulation of the cascade system also has the following equivalent state space formulation.

\begin{Proposition}
	The cascade system \eqref{Eq:lintransbous}-\eqref{Eq:sensor} can be formulated as
	\begin{subequations}\label{Eq:abslinsys}
		\begin{align}
		\dot{x}(t) &= Ax(t) + Bu(t) + B_du_d(t), \qquad x(0) = x_0 \in X, \\
		y(t) &= Cx(t) + D_du_d(t),
		\end{align}
	\end{subequations}
where the dynamics operator $A$ defined as in \eqref{Eq:opA} generates an analytic semigroup on the state space $X$ defined in \eqref{Eq:statespace} and the control operator $B$ together with the observation operator $C$ defined in \eqref{Eq:B} and \eqref{Eq:C} are bounded. Additionally, a change of the state variable $x$ can be applied such that also the resulting disturbance operator $B_d$ is bounded.
\end{Proposition}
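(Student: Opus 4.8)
The plan is to observe that the three claimed properties of $A$, $B$ and $C$ are essentially already in hand, so that the only genuinely new content is the reformulation of the boundary disturbance $B_{\Gamma_{u_d}}u_d$ as an \emph{in-domain} disturbance acting through a bounded operator. Indeed, Theorem \ref{thm:bilin} already gives that $A$ from \eqref{Eq:opA} generates an analytic semigroup on $X$, while $B$ and $C$ in \eqref{Eq:B} and \eqref{Eq:C} are bounded by construction, since they only involve the finite-dimensional data $B_a$ and $C_s$. I would therefore begin by recording these facts and isolating the boundary input $B_\Gamma[C_ax_a,\,u_d]^T$ of the boundary control system \eqref{Eq:BCSA}, noting that its state-dependent part $B_{\Gamma_u}C_ax_a$ has already been absorbed into the domain of $A$ through the identification $A=\A|_{\N(\B-B_{\Gamma_u}[0,\,C_a,\,0]^T)}$; thus only the disturbance part $B_{\Gamma_{u_d}}u_d$ remains to be removed from the boundary.

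The core step is to construct a bounded lifting operator and change the state variable accordingly. Fixing $\lambda\in\rho(A)$ and using surjectivity of $\B$ onto $X_\Gamma$ together with well-posedness of the associated stationary Stokes--diffusion problem (with the Leray projection eliminating the pressure, cf. \cite{Nguyen2015,Badra2012,Burns2016}), I would define the abstract Dirichlet/Robin map $D_\lambda\in\LL(X_\Gamma,X)$ sending boundary data $g$ to the unique $w\in D(\A)$ solving $(\lambda I-\A)w=0$ and $\B w=g$ (whose actuator component necessarily vanishes since $\lambda\in\rho(A_a)$), and set $N\coloneqq D_\lambda B_{\Gamma_{u_d}}\in\LL(U_d,X)$. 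By construction $Nu_d\in D(\A)$, $\B Nu_d=B_{\Gamma_{u_d}}u_d$ and $\A Nu_d=\lambda Nu_d$, so $\A N\in\LL(U_d,X)$ is bounded. Introducing the shifted state $\tilde x(t)=x(t)-Nu_d(t)$, the matching of boundary conditions gives $\B\tilde x=B_{\Gamma_u}C_a\tilde x_a$, hence $\tilde x(t)\in D(A)$, and differentiating yields
\begin{equation*}
\dot{\tilde x}(t)=A\tilde x(t)+Bu(t)+\lambda Nu_d(t)-N\dot u_d(t),
\end{equation*}
while the output becomes $y(t)=C\tilde x(t)+CNu_d(t)$. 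Since the sensor row of $\A$ couples $x_s$ to $x_b$, the lift $Nu_d$ carries a nonzero sensor component of the form $(\lambda I-A_s)^{-1}B_sC_b(\cdot)$, which is precisely the origin of the direct feedthrough $D_d\coloneqq CN$ appearing in \eqref{Eq:abslinsys}.

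It then remains to identify the bounded disturbance operator. The term $\lambda Nu_d=\A Nu_d$ is already of the desired form $B_du_d$ with $B_d\coloneqq\lambda N$. The remaining term $-N\dot u_d$ is handled by observing that the admissible disturbance class \eqref{Eq:distsig} is invariant under differentiation: $\dot u_d$ carries the same frequencies $\omega_i$ and polynomial coefficients of no larger degree, hence it is generated by the same exosystem as $u_d$. Consequently $-N\dot u_d$ can be absorbed into the disturbance through a bounded operator on the exosystem state, which is exactly the form required by the internal model controller of \cite{Paunonen2019}. After relabelling $\tilde x$ as $x$, the cascade system takes the form \eqref{Eq:abslinsys} with $B$, $B_d$ and $C$ all bounded.

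I expect the main obstacle to be the construction and boundedness of the lift $D_\lambda$ in the present setting: one must guarantee well-posedness and $H$-regularity of the stationary problem despite the mixed Dirichlet/Neumann/Robin boundary conditions and the reduced elliptic regularity noted after \eqref{Eq:opA} (so that $D(\A)\not\subset(H^2(\Omega))^2\times H^2(\Omega)\times\mathbb{R}^{n_a}\times\mathbb{R}^{n_s}$), and one must verify compatibility of the lift with incompressibility via the Leray projection. The bookkeeping of the $-N\dot u_d$ term, while conceptually routine within the output-regulation framework, is the secondary point requiring care.
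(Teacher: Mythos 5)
Your proposal is correct and follows essentially the same route as the paper: the paper's proof also records boundedness of $B$ and $C$ from their definitions, invokes the boundary control system framework of \cite[Ch. 10]{TucsnakWeiss}, and homogenizes the boundary disturbance via the change of variable $\tilde{x}_b = x_b - B_iB_{\Gamma_{u_d}}u_d$ with $B_i$ a right inverse of $\B_b$ as in \cite[Ch. 3.3]{Curtain1995}, exploiting the smoothness of $u_d$ in \eqref{Eq:distsig} exactly as you do when absorbing the $-N\dot{u}_d$ term, with your stationary solution map $D_\lambda B_{\Gamma_{u_d}}$ simply an explicit realization of that right inverse. The only cosmetic difference is that you lift the full cascade state (so the feedthrough arises as $D_d = CN$ through the sensor component), whereas the paper shifts only the PDE component $x_b$; both yield the bounded $B_d$ and $D_d$ claimed in \eqref{Eq:abslinsys}.
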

\begin{proof}
	Existence of the state space formulation \eqref{Eq:abslinsys} follows from the boundary control system formulation as presented in \cite[Ch. 10]{TucsnakWeiss} and boundedness of $B$ and $C$ is apparent from their definitions. The change of variable $\tilde{x}_b=x_b-B_iB_{\Gamma_{u_d}}$, where $B_i$ is a right inverse of $\B_b$, used to homogenize the boundary conditions and obtain a bounded operator $B_d$ is presented in \cite[Ch. 3.3]{Curtain1995}. Note that the change of variable abuses smoothness of the disturbance signal $u_d$ given by \eqref{Eq:distsig} and introduces a bounded feedthrough operator $D_d$ into the system.
\end{proof}
\noindent We note that the state space formulation together with the fact that the operators $B,B_d,C$ and $D_d$ are bounded will later in this paper be utilized for implementing the output tracking controller. 
\begin{remark}
	The controller to be implemented will use no information on the disturbance related operators $B_d$ and $D_d$, thus we do not formulate the cascade system using the state variable $\tilde{x}_b$. However, one needs to verify that a presentation using bounded disturbance operators $B_d$ and $D_d$ exists.
\end{remark}

\subsection{Stabilizability and Detectability of the System}\label{Ssec:StabDet}
We will be using a controller including an observer, which means that we need to address both stabilizability and detectability properties of the cascade system \eqref{Eq:lintransbous}-\eqref{Eq:sensor}.
To begin with we note that, in addition to the cascade system, also the linearized translated Boussinesq equations \eqref{Eq:lintransbous} form an abstract boundary control system
\begin{align*}
\dot{x}_b(t) &= \A_bx_b(t), \\
\B_b x_b(t) &= B_\Gamma \begin{bmatrix}
u_b(t) \\ u_d(t)
\end{bmatrix}
\end{align*}
with the observation \eqref{Eq:BoussObs}. This can be verified by repeating the steps of Section \ref{ssec:abstractsys} without the actuator and sensor dynamics. Here
\begin{align*}
\A_b &= \A|_{D(\A_b)}, \\
D(\A_b) &= \big\{ x_b \in H_b \big| \ \exists g_v \in (H^{-1/2}(\Gamma_I))^2, \ \exists g_{\theta_I} \in H^{-1/2}(\Gamma_I), \\ &\qquad \exists g_{\theta_H} \in H^{-1/2}(\Gamma_H): \forall \varphi \in H_b, \ \varphi \to a_b(x,\varphi) \textrm{ is $X_b$-continuous} \big\}
\end{align*}
with the bilinear form $a_b(\cdot,\cdot)$ defined by
\begin{align*}
	a_b((v,\theta),(\psi,\phi))
	=& a_v(v,\psi) + a_\theta(\theta,\phi) + b_v(v,w_{ss},\psi) + b_v(w_{ss},v,\psi) \\ &+ b_\theta(w_{ss},\theta,\phi) + b_\theta(v,T_{ss},\phi)-b_0(\theta,\psi) - \langle g_v,\psi \rangle_{\Gamma_I}  \\
	&- \langle g_{\theta_I}, \phi \rangle_{\Gamma_I} - \langle g_{\theta_H},\phi \rangle_{\Gamma_H}.
\end{align*}
The associated generator of a strongly continuous semigroup is $A_b = \A_b|_{\N(\B_b)}$ and the associated control and disturbance operators for the abstract state space formulation are 
\begin{equation*}
B_b = (\A_b-A_{b_{-1}})B_iB_{\Gamma_u}, \qquad B_{bd} = (\A_b-A_{b_{-1}})B_iB_{\Gamma_{u_d}},
\end{equation*}
where $B_i$ is a right inverse of $\B_b$ and $A_{b_{-1}}$ is an extension of $A$, see \cite[Ch. 10]{TucsnakWeiss}. Now an alternative presentation for the operator $A$ initially defined in \eqref{Eq:opA} is given by
\begin{equation}\label{Eq:Aalt}
	A = \begin{bmatrix}
	A_b & B_bC_a & 0 \\
	0 & A_a & 0 \\
	B_sC_b & 0 & A_s
	\end{bmatrix},
\end{equation}
which is the form that we use for the stabilizability and detectability analysis.

Recall that by Theorem \ref{thm:bilin} $A$ generates an analytic semigroup on $X$. Additionally, by Theorem \ref{thm:bilin}, Lax--Milgram theorem and compactness of the embedding $H$ onto $X$, the resolvent of $A$ is compact on $X$, c.f. \cite{Ramaswamy2019}. Thus $A$ has a finite number of isolated eigenvalues on the closed right half plane $\overline{\mathbb{C}_0^+}$, each with finite multiplicity.
As such, stabilizability and detectability considerations of the cascade system with the bounded control operator $B$ and the bounded observation operator $C$ can be treated as controllability and observability problems of the finite-dimensional unstable part, see \cite[Ch. 5.2]{Curtain1995}. That is, the pair $(A,C)$ is exponentially detectable if and only if
\begin{align}\label{Eq:isdet}
\N(sI-A) \cap \N(C) = \{0\} \qquad \textrm{for all }s \in \overline{\mathbb{C}_0^+}.
\end{align}
Since exponential stabilizability and exponential detectability are dual concepts,
exponential stabilizability of the pair $(A,B)$ is then a matter of exponential detectability of the dual pair $(A^*,B^*)$.

Let $P_b(s)\coloneqq C_b(sI-A_b)^{-1}B_b$ for $s \in \rho(A_b)$ and the analogously defined $P_a$ and $P_s$ denote the transfer functions of systems \eqref{Eq:lintransbous}, \eqref{Eq:actuator} and \eqref{Eq:sensor}, respectively.

\begin{assumption}\label{ass:isdet} Assume that the following hold:
	\begin{enumerate}[(i)]
		\item The spectra $\sigma(A_b)$, $\sigma(A_a)$ and $\sigma(A_s)$ are pairwise disjoint on $\overline{\mathbb{C}_0^+}$.
		\item The pair $(A_s,C_s)$ is detectable.
		\item For every $\lambda \in \overline{\mathbb{C}_0^+}$, $\N(P_s(\lambda)C_b) \cap \N(\lambda I-A_b) = \{0\}$.
		\item For every $\lambda \in \overline{\mathbb{C}_0^+}$, $\N\big(P_s(\lambda)P_b(\lambda)C_a\big) \cap \N(\lambda I-A_a) = \{0\}$.
	\end{enumerate}
\end{assumption}
\noindent Note that by the results in \cite{Badra2014} the condition \eqref{Eq:isdet} holds for the system $(A_b,B_b,C_b)$ if and only if the system is exponentially detectable even if the control and observation are unbounded. Thus Assumption \ref{ass:isdet}.(iii),(iv) includes assumption of exponential detectability of $(A_b,C_b)$ and $(A_a,C_a)$. 

\begin{assumption}\label{ass:isstab} Assume that the following hold:
	\begin{enumerate}[(i)]
		\item The spectra $\sigma(A_b^*)$, $\sigma(A_a^*)$ and $\sigma(A_s^*)$ are pairwise disjoint on $\overline{\mathbb{C}_0^+}$.
		\item The pair $(A_a,B_a)$ is stabilizable.
		\item For every $\lambda \in \overline{\mathbb{C}_0^+}$, $\N(P_a(\overline{\lambda})^*B_b^*)\cap \N(\lambda I-A_b^*) = \{0\}$.
		\item For every $\lambda \in \overline{\mathbb{C}_0^+}$, $\N\big( P_a(\overline{\lambda})^*P_b(\overline{\lambda})^*B_s^* \big) \cap \N(\lambda I-A_s^*) = \{0\}$.
	\end{enumerate}
\end{assumption}
\noindent Again, exponential stabilizability of $(A_b,B_b)$ and $(A_s,B_s)$ is necessary for Assumption \ref{ass:isstab}.(iii),(iv) to hold.
\begin{Lemma}\label{lemma:isdet}
	If Assumption \ref{ass:isdet} holds, then the pair $(A,C)$ is exponentially detectable.
\end{Lemma}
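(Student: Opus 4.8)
The plan is to exploit the reduction already assembled just before the statement. Since $A$ generates an analytic semigroup with compact resolvent (Theorem \ref{thm:bilin} together with the ensuing remark that the embedding $H \hookrightarrow X$ is compact), $A$ has only finitely many eigenvalues in $\overline{\mathbb{C}_0^+}$, and because $C$ in \eqref{Eq:C} is bounded, exponential detectability of $(A,C)$ is equivalent to the eigenvalue condition \eqref{Eq:isdet}. It therefore suffices to fix $s \in \overline{\mathbb{C}_0^+}$, take $x = [x_b,\ x_a,\ x_s]^T \in \N(sI-A)\cap\N(C)$, and deduce $x=0$. Writing $A$ in the block form \eqref{Eq:Aalt} and using $Cx = C_sx_s$, the membership $x \in \N(sI-A)\cap\N(C)$ is exactly the four relations
\begin{align*}
(sI-A_b)x_b &= B_bC_ax_a, & (sI-A_a)x_a &= 0, \\
(sI-A_s)x_s &= B_sC_bx_b, & C_sx_s &= 0.
\end{align*}

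I would then eliminate the three components in the order $x_a$, $x_b$, $x_s$. To show $x_a=0$: if $s\notin\sigma(A_a)$ this is immediate from the second relation, while if $s\in\sigma(A_a)$, the disjointness hypothesis Assumption \ref{ass:isdet}.(i) forces $s\in\rho(A_b)\cap\rho(A_s)$, so the first and third relations solve as $x_b=(sI-A_b)^{-1}B_bC_ax_a$ and $x_s=(sI-A_s)^{-1}B_sC_bx_b$. Substituting these into $C_sx_s=0$ and recognizing $C_s(sI-A_s)^{-1}B_s=P_s(s)$ and $C_b(sI-A_b)^{-1}B_b=P_b(s)$ yields $P_s(s)P_b(s)C_ax_a=0$; combined with $x_a\in\N(sI-A_a)$ this puts $x_a$ in $\N(P_s(s)P_b(s)C_a)\cap\N(sI-A_a)=\{0\}$ by Assumption \ref{ass:isdet}.(iv), so $x_a=0$. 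With $x_a=0$ the first relation gives $x_b\in\N(sI-A_b)$; if $s\notin\sigma(A_b)$ then $x_b=0$ at once, and otherwise disjointness again yields $s\in\rho(A_s)$, so $0=C_sx_s=P_s(s)C_bx_b$ and hence $x_b\in\N(P_s(s)C_b)\cap\N(sI-A_b)=\{0\}$ by Assumption \ref{ass:isdet}.(iii). Finally, with $x_a=x_b=0$ the third relation reduces to $x_s\in\N(sI-A_s)$ while $C_sx_s=0$ gives $x_s\in\N(C_s)$; since $(A_s,C_s)$ is a finite-dimensional pair, detectability in Assumption \ref{ass:isdet}.(ii) is equivalent to $\N(sI-A_s)\cap\N(C_s)=\{0\}$ on $\overline{\mathbb{C}_0^+}$, so $x_s=0$ and thus $x=0$.

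The main delicacy is the bookkeeping around the transfer functions: $P_b(s)$ and $P_s(s)$ are defined only for $s\in\rho(A_b)$ and $s\in\rho(A_s)$, so the argument must be arranged so that whenever a transfer function is invoked the corresponding resolvent genuinely exists. This is precisely what the pairwise spectral disjointness in Assumption \ref{ass:isdet}.(i) provides: at each stage $s$ can belong to at most one of $\sigma(A_b),\sigma(A_a),\sigma(A_s)$, which lets me invert the remaining two blocks and collapse the cascade $C_s(sI-A_s)^{-1}B_sC_b(sI-A_b)^{-1}B_bC_a$ into $P_s(s)P_b(s)C_a$ so that Assumption \ref{ass:isdet}.(iii)--(iv) apply verbatim. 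Beyond organizing this case analysis correctly, the remaining manipulations are routine.
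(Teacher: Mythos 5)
Your proof is correct and follows essentially the same route as the paper: reduce to the Hautus-type condition \eqref{Eq:isdet} via compact resolvent and boundedness of $C$, write out the four block relations from \eqref{Eq:Aalt}, and use the spectral disjointness in Assumption \ref{ass:isdet}.(i) to invert the appropriate blocks so that conditions (ii)--(iv) eliminate $x_a$, $x_b$, $x_s$ in turn. Your sequential elimination merely reorganizes the paper's case analysis on $\lambda\in\rho(A_a)$ versus $\lambda\in\sigma(A_a)$; the substance, including the identities $C_s(\lambda I-A_s)^{-1}B_s = P_s(\lambda)$ and $P_s(\lambda)P_b(\lambda)C_a x_a = 0$, is identical.
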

\begin{proof}
	To check that \eqref{Eq:isdet} holds for the pair $(A,C)$, let $(x_b,x_a,x_s) \in \N(\lambda I-A)\cap \N(C)$, where $\lambda \in \overline{\mathbb{C}_0^+}$. Using \eqref{Eq:B}, \eqref{Eq:C} and \eqref{Eq:Aalt} we have
\begin{subequations}
	\begin{numcases}{}
		C_sx_s = 0, \label{D1} \\
		(\lambda I-A_a)x_a = 0, \label{D2}\\
		(\lambda I-A_b)x_b - B_bC_ax_a = 0, \label{D3} \\
		(\lambda I-A_s)x_s - B_sC_bx_b = 0. \label{D4}
	\end{numcases}
	\end{subequations}
	If $\lambda \in \rho(A_a)$, \eqref{D2} implies $x_a = 0$. If $\lambda \in \rho(A_a) \cap \rho(A_b)$, \eqref{D3} implies $x_b = 0$ and then Assumption \ref{ass:isdet}.(ii), \eqref{D1} and \eqref{D4} imply $x_s = 0$. If $\lambda \in \rho(A_a) \cap \sigma(A_b)$, then
	$x_s = (\lambda I - A_s)^{-1}B_sC_bx_b$ by \eqref{D4} and Assumption \ref{ass:isdet}.(i). By \eqref{D1} and Assumption \ref{ass:isdet}.(iii) we have $x_b=0$, which then implies $x_s = 0$.
	
	If $\lambda \in \sigma(A_a)$, Assumption \ref{ass:isdet}.(i) and \eqref{D3} imply $x_b = (\lambda I-A_b)^{-1}B_bC_ax_a$. Using Assumption \ref{ass:isdet}.(i) again with \eqref{D1} and \eqref{D4} then yields $0 = C_s(\lambda I-A_s)^{-1}B_sC_b(\lambda I-A_b)^{-1}B_bC_ax_a = P_s(\lambda)P_b(\lambda)C_ax_a$, thus $x_a=0$ by Assumption \ref{ass:isdet}.(iv) and \eqref{D2}. Now also $x_s=0$ and $x_b=0$ by e.g. \eqref{D3} and \eqref{D4}. As such, we have $\N(\lambda I - A) \cap \N(C) = \{0\}$ for any $\lambda \in \overline{\mathbb{C}_0^+}$, thus $(A,C)$ is exponentially detectable.
\end{proof}

\begin{Lemma}\label{lemma:isstab}
	If Assumption \ref{ass:isstab} holds, then the pair $(A,B)$ is exponentially stabilizable.
\end{Lemma}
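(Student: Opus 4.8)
The plan is to exploit the duality between exponential stabilizability and exponential detectability already invoked in the text: the pair $(A,B)$ is exponentially stabilizable if and only if the dual pair $(A^*,B^*)$ is exponentially detectable, and by the criterion \eqref{Eq:isdet} the latter is equivalent to
\begin{equation*}
\N(\lambda I - A^*) \cap \N(B^*) = \{0\} \qquad \textrm{for all } \lambda \in \overline{\mathbb{C}_0^+}.
\end{equation*}
Using the block form \eqref{Eq:Aalt} of $A$ together with $B = [0_{X_b}, \ B_a, \ 0_{\mathbb{R}^{n_s}}]^T$, I would first compute
\begin{equation*}
A^* = \begin{bmatrix} A_b^* & 0 & C_b^*B_s^* \\ C_a^*B_b^* & A_a^* & 0 \\ 0 & 0 & A_s^* \end{bmatrix}, \qquad B^* = \begin{bmatrix} 0 & B_a^* & 0 \end{bmatrix},
\end{equation*}
and observe that $(A^*,B^*)$ has exactly the cascade structure of the pair $(A,C)$ in Lemma \ref{lemma:isdet}, with the middle block $A_b$ replaced by $A_b^*$ and the roles of the source and the observed subsystem interchanged: now $A_s^*$ is the source, information flows $A_s^* \to A_b^* \to A_a^*$ through the couplings $C_b^*B_s^*$ and $C_a^*B_b^*$, and $A_a^*$ is read out by $B_a^*$.

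Next I would check that Assumption \ref{ass:isstab} is precisely Assumption \ref{ass:isdet} transcribed to this dual cascade. Since $\sigma(A^*) = \overline{\sigma(A)}$ and complex conjugation maps $\overline{\mathbb{C}_0^+}$ onto itself, the pairwise disjointness in Assumption \ref{ass:isstab}.(i) yields the hypothesis of \ref{ass:isdet}.(i) for the dual system. Detectability of $(A_a^*,B_a^*)$, which is exactly stabilizability of $(A_a,B_a)$ from Assumption \ref{ass:isstab}.(ii), plays the role of \ref{ass:isdet}.(ii) at the observed end. The key identification is that the adjoint transfer functions satisfy $P_a(\overline{\lambda})^* = B_a^*(\lambda I - A_a^*)^{-1}C_a^*$ and $P_b(\overline{\lambda})^* = B_b^*(\lambda I - A_b^*)^{-1}C_b^*$, so that they are precisely the transfer functions of the dual subsystems $A_a^*$ and $A_b^*$; with this identification the conditions \ref{ass:isstab}.(iii) and \ref{ass:isstab}.(iv) become the nullspace-intersection conditions \ref{ass:isdet}.(iii) and \ref{ass:isdet}.(iv) for the dual pair.

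With these identifications in place, I would apply Lemma \ref{lemma:isdet} verbatim to $(A^*,B^*)$ to conclude that it is exponentially detectable, and hence that $(A,B)$ is exponentially stabilizable, without reproducing the case analysis. The routine but error-prone part, and the main thing to get right, is the bookkeeping of the adjoints: confirming that the off-diagonal coupling operators transpose into $C_b^*B_s^*$ and $C_a^*B_b^*$ in the correct block positions, and that the conjugated argument $\overline{\lambda}$ appearing in $P_a(\overline{\lambda})^*$ and $P_b(\overline{\lambda})^*$ correctly aligns the dual transfer functions with the frequency $\lambda$ at which the nullspace condition is tested. Once this correspondence is verified, the conclusion is immediate from Lemma \ref{lemma:isdet}.
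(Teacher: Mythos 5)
Your proposal is correct and takes essentially the same route as the paper: the paper likewise reduces stabilizability of $(A,B)$ to the Hautus-type test \eqref{Eq:isdet} for the dual pair $(A^*,B^*)$, and its proof consists exactly of the mirrored case analysis (over $\lambda \in \rho(A_s^*)$, $\rho(A_s^*)\cap\rho(A_b^*)$, $\rho(A_s^*)\cap\sigma(A_b^*)$, $\sigma(A_s^*)$) that your structural identification shows can be skipped, and your adjoint bookkeeping --- the block positions of $C_b^*B_s^*$ and $C_a^*B_b^*$, and $P_a(\overline{\lambda})^* = B_a^*(\lambda I - A_a^*)^{-1}C_a^*$, $P_b(\overline{\lambda})^* = B_b^*(\lambda I - A_b^*)^{-1}C_b^*$ --- checks out, so Assumption \ref{ass:isstab} is indeed the transcription of Assumption \ref{ass:isdet} to the reversed cascade $A_s^* \to A_b^* \to A_a^*$ with output $B_a^*$. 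The one point to state more carefully is that Lemma \ref{lemma:isdet} cannot be invoked literally ``verbatim,'' since its statement is tied to the specific triple $(A_b,A_a,A_s)$ with couplings $B_bC_a$, $B_sC_b$ and output $C_s$; what your identification actually licenses is applying the \emph{proof} of Lemma \ref{lemma:isdet}, which uses only the cascade structure, mutatis mutandis to the dual triple --- a cosmetic gap, not a mathematical one.
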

\begin{proof}
	We check that \eqref{Eq:isdet} holds for the pair $(A^*,B^*)$. Let $(x_b,x_a,x_s) \in \N(\lambda I-A^*) \cap \N(B^*)$, where $\lambda \in \overline{\mathbb{C}_0^+}$. Using \eqref{Eq:B}, \eqref{Eq:C} and \eqref{Eq:Aalt} we have
	\begin{subequations}
		\begin{numcases}{}
			B_a^*x_a = 0, \label{S1} \\
			(\lambda I-A_s^*)x_s = 0, \label{S2} \\
			(\lambda I - A_b^*)x_b - C_b^*B_s^*x_s = 0, \label{S3} \\
			(\lambda I - A_a^*)x_a - C_a^*B_b^*x_b = 0. \label{S4}
		\end{numcases}
	\end{subequations}
	If $\lambda \in \rho(A^*_s)$, then \eqref{S2} immediately implies $x_s = 0$. If $\lambda \in \rho(A^*_s) \cap \rho(A_b^*)$, \eqref{S3} implies $x_b=0$, thus also $x_a=0$ by Assumption \ref{ass:isstab}.(ii), \eqref{S1} and \eqref{S4}. If $\lambda \in \rho(A_s^*) \cap \sigma(A_b^*)$, then \eqref{S4} and Assumption \ref{ass:isstab}.(i) imply $x_a = (\lambda I - A_a^*)^{-1}C_a^*B_b^*x_b$, thus $x_b=0$ by \eqref{S1} and Assumption \ref{ass:isstab}.(iii), from which $x_a=0$ follows.
	
	If $\lambda \in \sigma(A_s^*)$, then $x_b = (\lambda I - A_b^*)^{-1}C_b^*B_s^*x_s$ by \eqref{S3} and Assumption \ref{ass:isstab}.(i). Using Assumption \ref{ass:isstab}.(i) again with \eqref{S4} and \eqref{S1}, we get $0 = B^*_a(\lambda I - A_a^*)^{-1}C_a^*B_b^*(\lambda I -A_b^*)^{-1}C_b^*B_s^*x_s = P_a(\overline{\lambda})^*P_b(\overline{\lambda})^*B_s^*x_s$.
	Now $x_s = 0$ by Assumption \ref{ass:isstab}.(iv) and \eqref{S2}, and $x_a=0$, $x_b=0$ follow immediately, thus $\N(\lambda I - A^*) \cap \N(B^*) = \{0\}$ and $(A,B)$ is exponentially stabilizable.
\end{proof}

\section{Robust Output Regulation}\label{Sec:ROR}
The output tracking goal \eqref{Eq:desiredtracking} is in the case of abstract linear systems covered by the \emph{robust output regulation problem}. We start by coupling an error feedback controller with the cascade system \eqref{Eq:abslinsys}. The resulting system is called the \emph{closed-loop system}. We then present the robust output regulation problem, which describes requirements for choosing the controller operators. Finally, we design an error feedback controller, introduced in \cite{Paunonen2019}, to solve the robust output regulation problem for the room model.

A general error feedback controller on a Hilbert space $Z$ is given by
\begin{subequations}\label{Eq:errorfbcont}
	\begin{align}
	\dot{z}(t) &= \mathcal{G}_1z(t) + \mathcal{G}_2e(t),  \qquad z(0) = z_0 \in Z, \\
	u(t) &= Kz(t),
	\end{align}	
\end{subequations}
where $\mathcal{G}_1$ generates a strongly continuous semigroup on $Z$, $\mathcal{G}_2 \in \LL(Y,Z)$, $K \in \LL(Z,U)$ and $e(t)=y(t)-y_{ref}(t)$ is the regulation error.
Coupling the controller with the cascade system \eqref{Eq:abslinsys} yields the closed-loop system, see  \cite{Hamalainen2010,Paunonen2010},
\begin{align*}
\dot{x}_e(t) &= A_ex_e(t) + B_ew_{ext}(t), \qquad x_e(0) = x_{e0}, \\
e(t) &= C_ex_e(t) + D_ew_{ext}(t)
\end{align*}
on the Hilbert space $X_e \coloneqq X \times Z$ with the state $x_{e} = [x, \ z]^T$.
Here $w_{ext} = [u_d, \ y_{ref}]^T$,
\begin{alignat*}{2}
A_e &= \begin{bmatrix}
A & BK \\
\mathcal{G}_1C & \mathcal{G}_1
\end{bmatrix}, \qquad &&B_e = \begin{bmatrix}
B_d & 0 \\
\mathcal{G}_2 D_d & - \mathcal{G}_2
\end{bmatrix}, \\
C_e &= \begin{bmatrix}
C, & 0
\end{bmatrix}, &&D_e = \begin{bmatrix}
D_d, & -I
\end{bmatrix}.
\end{alignat*}

\noindent\textbf{The Robust Output Regulation Problem.}
Design a controller \linebreak[4] $(\mathcal{G}_1,\mathcal{G}_2,K)$ such that the following hold:
\begin{enumerate}[(I)]
	\item The closed-loop semigroup is exponentially stable.
	\item  There exist $M_r, \omega_r > 0$ such that for all initial states $x_{e0} \in X_e$ of the closed-loop system and for all reference signals $y_{ref}$ in \eqref{Eq:refsig} and disturbance signals $u_d$ in \eqref{Eq:distsig}
	\begin{align}\label{Eq:regerr}
	\Vert y(t)-y_{ref}(t)\Vert \le M_re^{-\omega_r t}(\Vert x_{e0}\Vert + \Vert \Lambda\Vert),
	\end{align}
	where $\Lambda$ is a vector consisting of the coefficients $a_i(t)$, $b_i(t)$, $c_i(t)$ and $d_i(t)$ of $y_{ref}$ and $u_d$.
	\item If $A,B,B_{d},C,D_{d}$ in \eqref{Eq:abslinsys} are perturbed to $\tilde{A},\tilde{B},\tilde{B}_{d},\tilde{C},\tilde{D}_{d}$ in such a way that the closed-loop system remains exponentially stable, then for all $x_{e0} \in X_e$ and for all signals of the form \eqref{Eq:refsig}, \eqref{Eq:distsig} the regulation error satisfies \eqref{Eq:regerr} for some $\tilde{M}_r,\tilde{\omega}_r > 0$.
\end{enumerate}

By the internal model principle, a controller solves the robust output regulation problem precisely when it includes a suitable \emph{internal model} of the reference and disturbance signals and the closed-loop system is exponentially stable \cite{Paunonen2010}. The following lemma, i.e. $(A,B,C)$ having no transmission zeros at the relevant frequencies, is a standard necessary property for solvability of the problem.
\begin{assumption}\label{ass:invariant}
	None of the systems $(A_b,B_b,C_b)$, $(A_a,B_a,C_a)$ and \linebreak[4]$(A_s,B_s,C_s)$ has transmission zeros at the frequencies $\{ i\omega_k\}_{k=0}^{q_s}$, i.e. for a bounded stabilizing feedback operator $K_b$, the transfer function $P_{b,K_b}(i\omega_k)=(C_b)(i\omega_kI-A_b-B_bK_b)^{-1}B_b$ is surjective for $k = 0,1,\dots,q_s$ and the equivalent result holds for $P_{a,K_a}$ and $P_{s,K_s}$.	
\end{assumption}
\begin{Lemma}\label{lemma:invariant}
 Given Assumption \ref{ass:invariant}, the cascade system $(A,B,C)$ has no transmission zeros at the frequencies $\{ i\omega_k\}_{k=0}^{q_s}$.
\end{Lemma}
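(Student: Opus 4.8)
The plan is to reduce the claim to a statement about ranges of transfer functions and then to realise a prescribed output value by solving backwards through the three subsystems of the cascade. Since the cascade is exponentially stabilizable by Lemma \ref{lemma:isstab}, there is a bounded feedback $K$ with $i\omega_k\in\rho(A+BK)$, and by definition (cf. Assumption \ref{ass:invariant}) the system $(A,B,C)$ has no transmission zero at $i\omega_k$ precisely when the closed-loop transfer function $P_K(i\omega_k)=C(i\omega_kI-A-BK)^{-1}B$ is surjective. The first step is to observe that this range does not depend on the stabilizing $K$: writing $x=(i\omega_kI-A-BK)^{-1}Bu$ gives $(i\omega_kI-A)x=B(u+Kx)$, and conversely any $x$ with $(i\omega_kI-A)x=B\tilde u$ comes from $u=\tilde u-Kx$, so that
\begin{equation*}
\operatorname{ran}P_K(i\omega_k)=\bigl\{\, Cx \;\big|\; x\in D(A),\ (i\omega_kI-A)x\in\operatorname{ran}B \,\bigr\}=:S .
\end{equation*}
The same identity holds verbatim for each subsystem, so the goal becomes $S=Y$, while Assumption \ref{ass:invariant} supplies exactly the analogous equalities $S_b=Y_b$, $S_a=\mathbb{R}^{m_b}$ and $S_s=Y$ for $(A_b,B_b,C_b)$, $(A_a,B_a,C_a)$ and $(A_s,B_s,C_s)$.

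Next I would unfold $S$ using the block form \eqref{Eq:Aalt} of $A$ together with \eqref{Eq:B} and \eqref{Eq:C}. For $x=[x_b,x_a,x_s]^T$ the membership $(i\omega_kI-A)x\in\operatorname{ran}B$ means that, for some $u\in U$,
\begin{align*}
(i\omega_kI-A_b)x_b-B_bC_ax_a &= 0, \\
(i\omega_kI-A_a)x_a &= B_au, \\
(i\omega_kI-A_s)x_s-B_sC_bx_b &= 0,
\end{align*}
while $Cx=C_sx_s$. The key idea is to realise an arbitrary $y_0\in Y$ by resolving these relations against the direction of the cascade, i.e. sensor first, then plant, then actuator. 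Using $S_s=Y$ I obtain $x_s$ and a value $\eta\in Y_b$ with $(i\omega_kI-A_s)x_s=B_s\eta$ and $C_sx_s=y_0$; feeding this $\eta$ into $S_b=Y_b$ yields $x_b\in D(A_b)$ and $\zeta\in\mathbb{R}^{m_b}$ with $(i\omega_kI-A_b)x_b=B_b\zeta$ and $C_bx_b=\eta$; finally $S_a=\mathbb{R}^{m_b}$ yields $x_a$ and $u$ with $(i\omega_kI-A_a)x_a=B_au$ and $C_ax_a=\zeta$.

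Assembling $x=[x_b,x_a,x_s]^T$, the identities $C_ax_a=\zeta$ and $C_bx_b=\eta$ make the first and third block equations vanish, so $(i\omega_kI-A)x=Bu\in\operatorname{ran}B$ and $Cx=C_sx_s=y_0$; since $x_b\in D(A_b)$ and $x_a,x_s$ are finite-dimensional, $x\in D(A)$. As $y_0\in Y$ was arbitrary this gives $S=Y$, hence $(A,B,C)$ has no transmission zero at $i\omega_k$. I expect the only genuine obstacle to lie in the first step, namely justifying that the relevant range is the feedback-independent set $S$ (so that subsystem spectra on the imaginary axis cause no difficulty, $P(i\omega_k)$ itself being possibly undefined) and matching the domain $D(A)$ with $D(A_b)$; after that the backward chaining is essentially bookkeeping. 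It is worth emphasising that, in contrast to Lemmas \ref{lemma:isdet} and \ref{lemma:isstab}, the spectral disjointness hypotheses play no role here.
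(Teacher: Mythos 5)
Your argument is correct in substance and is, at heart, the same as the paper's: the paper disposes of the lemma in one line by noting that the transfer function of the cascade is the product of the transfer functions $P_s$, $P_b$, $P_a$ of the three subsystems, and your backward chaining through sensor, plant and actuator is precisely a hands-on verification of the surjectivity of that product. Your formulation via the feedback-independent range set $S$ is in fact more careful than the paper's one-liner, since it remains meaningful when $i\omega_k$ lies in $\sigma(A_b)\cup\sigma(A_a)\cup\sigma(A_s)$, where the open-loop factors are undefined, and your closing observation that the spectral disjointness hypotheses of Assumptions \ref{ass:isdet} and \ref{ass:isstab} play no role here is also correct.

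Two points need repair, both at the spot you yourself flagged. First, the claim ``the same identity holds verbatim for each subsystem'' and the final assembly step ``since $x_b\in D(A_b)$ and $x_a,x_s$ are finite-dimensional, $x\in D(A)$'' are wrong as written, because $B_b=(\A_b-A_{b_{-1}})B_iB_{\Gamma_u}$ is an unbounded boundary control operator: the relation $(i\omega_kI-A_b)x_b=B_b\zeta$ with $x_b\in D(A_b)$ would force $B_b\zeta\in X_b$, which fails for generic $\zeta\neq 0$, and $D(A)$ is not the product $D(A_b)\times\mathbb{R}^{n_a}\times\mathbb{R}^{n_s}$ but the twisted domain $D(A)=\N(\B-B_{\Gamma_u}[0,\ C_a,\ 0]^T)\subset D(\A)$, i.e.\ it carries the inhomogeneous boundary condition $\B_bx_b=B_{\Gamma_u}C_ax_a$; a genuine $x_b\in D(A_b)$ (homogeneous boundary data) with $C_ax_a\neq 0$ would give $x\notin D(A)$. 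The fix is to read the plant step in the boundary-control sense: Assumption \ref{ass:invariant} for $(A_b,B_b,C_b)$ produces $x_b\in D(\A_b)$ with $(i\omega_kI-\A_b)x_b=0$, $\B_bx_b=B_{\Gamma_u}\zeta$ and $C_bx_b=\eta$ (equivalently $(i\omega_kI-A_{b_{-1}})x_b=B_b\zeta$ in the extrapolation space), and then, precisely because you chose $\zeta=C_ax_a$, the assembled $x$ satisfies the boundary condition defining $D(A)$ and the rest of your bookkeeping goes through. Second, invoking Lemma \ref{lemma:isstab} imports Assumption \ref{ass:isstab}, which is not a hypothesis of this lemma; this is harmless, since the stabilizing $K$ is needed only to phrase ``no transmission zeros'' as surjectivity of $P_K(i\omega_k)$ — your identity $S=Y$ is itself feedback-free, and Assumption \ref{ass:isstab} is in force in the paper's main theorem anyway — but it should be presented as part of the interpretation of the statement rather than as a consequence of a lemma whose hypotheses you have not been granted.
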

\noindent The proof follows immediately since transfer function of the cascade system is the product of the transfer functions of the three subsystems.

The particular controller design we propose for the cascade system is the one in \cite[Sec. III.A]{Paunonen2019}, see also \cite{Phan2019} for its boundary control system implementation.

\noindent \textbf{The Observer-Based Finite-Dimensional Controller} is given by
\begin{subequations}\label{Eq:obscont}
	\begin{align}
	\dot{z}_1(t) &= G_1z_1(t) + G_2e(t), \\
	\dot{z}_2(t) &= (A_L^r + B_L^r K_2^r) z_2(t) + B_L^r K_1^N z_1(t) - L^re(t), \\
	u(t) &= K_1^N z_1(t) + K_2^r z_2(t), 
	\end{align}
\end{subequations}
and is of the form \eqref{Eq:errorfbcont} with $z(t) := [z_1(t), \ z_2(t)]^T \in Z :=Z_{im} \times \mathbb{C}^r$,
\begin{align*}
\mathcal{G}_1 = \begin{bmatrix}
G_1 & 0 \\ B_L^rK_1^N & A_L^r + B_L^rK_2^r
\end{bmatrix}, \qquad \mathcal{G}_2 = \begin{bmatrix}
G_2 \\ -L^r
\end{bmatrix}, \qquad K = \begin{bmatrix}
K_1^N & K_2^r
\end{bmatrix}.
\end{align*}
For the cascade system \eqref{Eq:lintransbous}-\eqref{Eq:sensor}, the operators in \eqref{Eq:obscont} are chosen according to the following algorithm.

\noindent\textbf{\RNum{1} The Internal Model:}
\newline Choose $Z_{im} = Y^{n_0} \times Y^{2n_1} \times \ldots \times Y^{2n_{q_s}}$, where $n_i-1$, $i \in \{0,1,...,q_s\}$ is the highest order polynomial coefficient of the corresponding frequency $\omega_i$ in \eqref{Eq:refsig}. Set $G_1 = \textrm{diag}\big(J_0^{Y}, \ldots J_{q_s}^{Y}\big) \in \LL(Z_{im})$ and $G_2 = \big(G_2^k\big)_{k=0}^{q_s} \in \LL(Y,Z_{im})$, where
\begin{align*}
J_0^{Y} = \begin{bmatrix}
0_p & I_p & & \\
& 0_p & \ddots & \\
& & \ddots & I_p \\
& & & 0_p
\end{bmatrix}, \qquad G_2^0 = \begin{bmatrix}
0_p \\ \vdots \\ 0_p \\ I_p
\end{bmatrix}
\end{align*}
and for $k=1 \dots q_s$
\begin{align*}
J_k^{Y} = \begin{bmatrix}
\Omega_k & I_{2p} & & \\
& \Omega_k & \ddots & \\
& & \ddots & I_{2p} \\
& & & \Omega_k
\end{bmatrix}, \qquad G_2^k = \begin{bmatrix}
0_{2p} \\ \vdots \\ 0_{2p} \\ I_p \\ 0_p
\end{bmatrix}, \qquad \Omega_k = \begin{bmatrix}
0_p & \omega_kI_p \\
-\omega_kI_p & 0_p
\end{bmatrix}.
\end{align*}

\noindent \textbf{\RNum{2} Plant Approximation and Stabilization:}
\newline For a sufficiently large $N \in \mathbb{N}$, discretize the operators $A_b$, $B_b$ and $C_b$ to obtain the finite-dimensional approximative system $(A_b^N,B_b^N,C_b^N)$ on $H_b^N$. The chosen approximation method should satisfy the following assumption.
\begin{assumption}\label{ass:approximation}
	The finite-dimensional approximating subspaces $H_b^N$ of $H_b$ have the following property: For any $x_b \in H_b$ there exists a sequence $x_b^N \in H_b^n$ such that 
	\begin{equation}\label{Eq:approxprop}
		\Vert x_b^N-x_b \Vert_{H_b} \to 0 \textrm{ 
			as } N \to \infty.
	\end{equation}
\end{assumption}
\noindent More specifically, approximation of the coupled-system should have the property equivalent to
\eqref{Eq:approxprop}, and Assumption \ref{ass:approximation} implies the approximation property for the cascade system.
\begin{Lemma}\label{lemma:approximation}
	Let $H^N \coloneqq H_b^N \times \mathbb{R}^{n_a} \times \mathbb{R}^{n_s}$ denote the finite-dimensional approximating subspaces of $H$. Given Assumption \ref{ass:approximation},
	there exists a sequence $x^N \in H^N$ such that $\Vert x^N-x\Vert_{H} \to 0$ as $N \to \infty$.
\end{Lemma}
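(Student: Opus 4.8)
The plan is to exploit the product structure $H = H_b \times \mathbb{R}^{n_a} \times \mathbb{R}^{n_s}$ together with the orthogonal-type decomposition of the norm recorded in \eqref{Eq:norms}. The key observation is that the actuator and sensor state spaces $\mathbb{R}^{n_a}$ and $\mathbb{R}^{n_s}$ are already finite-dimensional, so they require no approximation whatsoever; only the infinite-dimensional Boussinesq component needs a Galerkin-type approximating sequence, which Assumption \ref{ass:approximation} supplies.

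Concretely, I would fix an arbitrary $x \in H$ and write it in components as $x = [x_b, \ x_a, \ x_s]^T$ with $x_b \in H_b$, $x_a \in \mathbb{R}^{n_a}$ and $x_s \in \mathbb{R}^{n_s}$. Applying Assumption \ref{ass:approximation} to $x_b$ yields a sequence $x_b^N \in H_b^N$ with $\Vert x_b^N - x_b \Vert_{H_b} \to 0$ as $N \to \infty$. I would then define the approximating sequence in $H^N$ by keeping the finite-dimensional coordinates fixed, namely
\begin{equation*}
x^N = [x_b^N, \ x_a, \ x_s]^T \in H_b^N \times \mathbb{R}^{n_a} \times \mathbb{R}^{n_s} = H^N.
\end{equation*}

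The final step is a direct estimate using the norm definition \eqref{Eq:norms}. Since the actuator and sensor components of $x^N$ agree with those of $x$ for every $N$, the corresponding error terms vanish identically, leaving
\begin{equation*}
\Vert x^N - x \Vert_H^2 = \Vert x_b^N - x_b \Vert_{H_b}^2 + \Vert x_a - x_a \Vert_{\mathbb{R}^{n_a}}^2 + \Vert x_s - x_s \Vert_{\mathbb{R}^{n_s}}^2 = \Vert x_b^N - x_b \Vert_{H_b}^2,
\end{equation*}
which tends to $0$ by the choice of $x_b^N$. Hence $\Vert x^N - x \Vert_H \to 0$, as claimed. I do not expect any genuine obstacle in this argument: the result is essentially a bookkeeping consequence of the product norm and the fact that the ODE blocks are finite-dimensional. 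The only point that warrants an explicit remark is that $H^N$ inherits its norm as the direct sum of the three component norms, so that approximation in $H_b$ alone suffices to control the full error.
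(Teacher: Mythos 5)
Your proof is correct and matches the paper's own argument exactly: the paper likewise sets $x^N = [x_b^N, \ x_a, \ x_s]^T$ with $x_b^N$ given by Assumption \ref{ass:approximation} and notes the conclusion is immediate. Your only addition is spelling out the norm computation from \eqref{Eq:norms}, which the paper leaves implicit.
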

\noindent The proof follows immediately by choosing $x^N = [x_b^N, \ x_a, \ x_s]^T$, where $x_b^N$ such that Assumption \ref{ass:approximation} holds.

Denote the cascade system approximation on $H^N$ by $(A^N,B^N,C^N)$.
Let $\alpha_1,\alpha_2 \ge 0$ and choose $Q_1 \in \LL(U_0,X)$, $Q_2 \in \LL(X,Y_0)$ such that $(A + \alpha_1I,Q_1,C)$ and $(A + \alpha_2I,B,Q_2)$ are exponentially stabilizable and detectable, where $U_0$ and $Y_0$ are Hilbert spaces. Denote by $Q_1^N$ and $Q_2^N$ the approximations of $Q_1$ and $Q_2$ on $H^N$. Choose $0 < R_1 \in \LL(Y)$ and $0 < R_2 \in \LL(U)$, and choose $Q_0 \in \LL(Z_{im},\mathbb{C}^{p_0})$ such that $(Q_0,G_1)$ is observable. Let
\begin{equation*}
A_c^N = \begin{bmatrix}
G_1 &  G_2C^N \\
0 & A^N
\end{bmatrix}, \qquad B_c^N = \begin{bmatrix}
0 \\ B^N
\end{bmatrix}, \qquad Q_c^N = \begin{bmatrix}
Q_0 & 0 \\ 0 & Q_2^N
\end{bmatrix}
\end{equation*} 
and solve the finite-dimensional Riccati equations
\begin{align*}
&(A^N+ \alpha_1I)\Sigma_N + \Sigma_N(A^N+\alpha_1I)^* - \Sigma_N(C^N)^*R_1^{-1}C^N\Sigma_N = -Q_1^N(Q_1^N)^*, \\
&(A_c^N + \alpha_2I)^*\Pi_N + \Pi_N (A_c^N+\alpha_2I) - \Pi_NB_c^NR_2^{-1}(B_c^N)^*\Pi_N = -(Q_c^N)^*Q_c^N.
\end{align*}
Finally, define $L^N = -\Sigma_NC^NR_1^{-1} \in \LL(Y,H^N)$ and $K^N \coloneqq [K_1^N, \ K_2^N] = -R_2^{-1}(B_c^N)^*\Pi_N \in \LL(Z_{im} \times H^N,U)$.

\noindent \textbf{\RNum{3} Model Reduction:}
\newline For a sufficiently large $ r \le N$, apply balanced truncation, see \cite[Sec. II-B]{Paunonen2019} and the references therein, on the stable system
\begin{equation*}
(A^N+L^NC^N,[B^N,L^N],K_2^N)
\end{equation*}
to obtain the reduced order system
\begin{equation*}
(A_L^r,[B_L^r, \ L^r],K_2^r).
\end{equation*}

By \cite[Thm. III.1]{Paunonen2019}, the Observer-Based Finite-Dimensional Controller solves the robust output regulation problem for a class of systems including the cascade system \eqref{Eq:lintransbous}-\eqref{Eq:sensor}. Thus the following holds for robust output tracking of the linearized translated Boussinesq equations \eqref{Eq:lintransbous}, \eqref{Eq:boussobs} with actuator dynamics \eqref{Eq:actuator} and sensor dynamics \eqref{Eq:sensor}.
\begin{Theorem}	
	Assume that the control and disturbance shape functions $[b_{v}, \ b_{dv}] \in (L^2(\Gamma_I))^i$, $[b_{\theta_I}, \ b_{d\theta_I}] \in (L^2(\Gamma_I))^j$ and $[b_{\theta_H}, \ b_{d\theta_H}] \in (L^2(\Gamma_H))^k$, where $i$, $j$ and $k$ equal the sum of the number of control and disturbance inputs for the inlet velocity, the inlet temperature and the heating strip temperature, respectively. Assume that the observation satisfies Assumption \ref{ass:admobs} and the cascade system \eqref{Eq:lintransbous}-\eqref{Eq:sensor} satisfies Assumptions \ref{ass:isdet}, \ref{ass:isstab} and \ref{ass:invariant}. Then the finite-dimensional low-order controller \eqref{Eq:obscont} solves the robust output regulation problem, thus achieving the output convergence \eqref{Eq:desiredtracking}, provided that Assumption \ref{ass:approximation} holds for the approximation method used in the controller design and the orders of approximation $N$ and $r \le N$ are large enough.	
\end{Theorem}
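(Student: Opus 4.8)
The plan is to verify that the cascade system meets every structural hypothesis required by \cite[Thm. III.1]{Paunonen2019} and then to invoke that result directly, since it already establishes that the Observer-Based Finite-Dimensional Controller \eqref{Eq:obscont}, built through Steps I--III, solves the robust output regulation problem for the relevant class of abstract systems. Concretely, the cited theorem applies to systems that (a) are governed by an analytic semigroup with bounded control, disturbance and observation operators, (b) are exponentially stabilizable and detectable, (c) have no transmission zeros at the frequencies $\{i\omega_k\}_{k=0}^{q_s}$, and (d) admit a convergent Galerkin approximation. The proof therefore reduces to checking (a)--(d) for $(A,B,B_d,C,D_d)$.

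First I would establish (a). The hypotheses $[b_v,b_{dv}]\in(L^2(\Gamma_I))^i$, $[b_{\theta_I},b_{d\theta_I}]\in(L^2(\Gamma_I))^j$ and $[b_{\theta_H},b_{d\theta_H}]\in(L^2(\Gamma_H))^k$ ensure that the boundary input maps $B_{\Gamma_u}$ and $B_{\Gamma_{u_d}}$ are bounded into $X_\Gamma$, so that the boundary control system \eqref{Eq:BCSA} is well posed. The Proposition then yields the equivalent state-space form \eqref{Eq:abslinsys} with bounded $B$ and $C$ and, after the stated change of variable, bounded $B_d$ and $D_d$; analyticity of the semigroup generated by $A$ on $X$ is Theorem \ref{thm:bilin}. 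Assumption \ref{ass:admobs} guarantees $C_b\in\LL(H_b,Y_b)$, which underlies the bound \eqref{Eq:Cbbound} and hence the consistency of a bounded observation operator $C$.

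Next I would dispatch (b) and (c) by citing the lemmas already proved: exponential detectability of $(A,C)$ is Lemma \ref{lemma:isdet} (under Assumption \ref{ass:isdet}), exponential stabilizability of $(A,B)$ is Lemma \ref{lemma:isstab} (under Assumption \ref{ass:isstab}), and the absence of transmission zeros at $\{i\omega_k\}_{k=0}^{q_s}$---the internal-model solvability condition---is Lemma \ref{lemma:invariant} (under Assumption \ref{ass:invariant}). For (d), Assumption \ref{ass:approximation} together with Lemma \ref{lemma:approximation} supplies the approximation property on the full state space $H$, which is the convergence ingredient the balanced-truncation construction of Step III relies on. With (a)--(d) in hand, \cite[Thm. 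III.1]{Paunonen2019} gives, for $N$ and $r\le N$ large enough, exponential stability of the closed loop and the regulation estimate \eqref{Eq:regerr}; the target convergence \eqref{Eq:desiredtracking} then follows at once.

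I expect the main obstacle to lie in condition (d): confirming that the discretization chosen in Step II actually satisfies Assumption \ref{ass:approximation} on the constrained space $H_b$, since the divergence-free requirement and the mixed boundary conditions sharply restrict the admissible finite-dimensional subspaces, and that the associated approximate Riccati solutions $\Sigma_N$ and $\Pi_N$ converge well enough for closed-loop stability to be inherited uniformly for all sufficiently large $N$ and $r$. This approximation-theoretic convergence is exactly what sets the ``large enough'' thresholds and is the substantive content that the cited theorem packages; everything else in the proof is a matter of matching the already-verified hypotheses to the theorem's requirements.
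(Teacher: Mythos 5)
Your proposal is correct and follows essentially the same route as the paper: the paper's proof consists precisely of invoking \cite[Thm. III.1]{Paunonen2019}, with the hypotheses supplied by the preceding results (Theorem \ref{thm:bilin} and the Proposition for the analytic semigroup and bounded operators $B$, $B_d$, $C$, $D_d$; Lemmata \ref{lemma:isdet}, \ref{lemma:isstab}, \ref{lemma:invariant} for stabilizability, detectability and absence of transmission zeros; and Lemma \ref{lemma:approximation} for the approximation property). Your checklist (a)--(d) merely makes explicit the bookkeeping the paper leaves implicit, so the two arguments coincide.
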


\section{Output Tracking Example for the Room Model}\label{Sec:example}
We consider robust output tracking of the linearized translated Boussinesq equations \eqref{Eq:lintransbous} with the actuator dynamics \eqref{Eq:actuator} and the sensor dynamics \eqref{Eq:sensor} in the rectangular room $\Omega = [0,1] \times [0,1]$ with the inlet, the outlet and the heating strip given by
\begin{align*}
\Gamma_I &= \bigg\{\xi_1 = 0, \ \frac{5}{8} \le \xi_2 \le \frac{7}{8} \bigg\}, \quad \Gamma_O = \bigg\{ \xi_1 = 1, \ \frac{1}{8} \le \xi_2 \le \frac{1}{2} \bigg\}, \\
\Gamma_H &= \bigg\{\frac{3}{8} \le \xi_1 \le \frac{5}{8}, \ \xi_2 = 0  \bigg\},
\end{align*}
which roughly corresponds to Figure \ref{fig:Room}. 
Physical parameters for the Boussinesq equations are chosen as $Re = 100$, $Gr = \frac{Re^2}{0.9}$ and $Pr = 0.7$.
There are three control inputs on the fluid; one on each of $v_{\xi_1}$ and $\theta$ within the inlet and one on $\theta$ within the heating strip, with coefficients $\alpha_v=\alpha_\theta=1$ indicating Robin boundary conditions within the inlet. Additionally, we assume that a single disturbance signal acts within the inlet on $v_{\xi_2}$. Now $u_b(t) = [u_v(t), \ u_{\theta_I}(t), \ u_{\theta_H}(t)]^T\in \mathbb{R}^3$ and $u_{d}(t) = u_{dv}(t) \in \mathbb{R}$. The control and disturbance shapes are given by
\begin{align*}
b_{v}(\xi_2) &= \begin{bmatrix}
\exp\bigg( \frac{-0.00004}{((5/8-\xi_2)(7/8-\xi_2))^2} \bigg) \bigg|_{\Gamma_I} , & 0
\end{bmatrix}^T, \\
b_{\theta_I}(\xi_2) &=
\exp\bigg( \frac{-0.00002}{((5/8-\xi_2)(7/8-\xi_2))^2} \bigg)\bigg|_{\Gamma_I}, \\
b_{\theta_H}(\xi_1) &=  
\exp\bigg( \frac{-0.00001}{((3/8-\xi_1)(5/8-\xi_1))^2} \bigg)\bigg|_{\Gamma_H}, \\
b_{dv}(\xi_2) &=\begin{bmatrix}
0, & \exp\bigg( \frac{-0.0003}{((5/8-\xi_2)(7/8-\xi_2))^2} \bigg)\bigg|_{\Gamma_I}
\end{bmatrix}^T
\end{align*}
with the non-zero components depicted in Figure \ref{fig:profiles}.
\begin{figure*}[h]
	\centering
	\begin{subfigure}[t!]{0.48\textwidth}
		\centering
		\includegraphics[width=\textwidth]{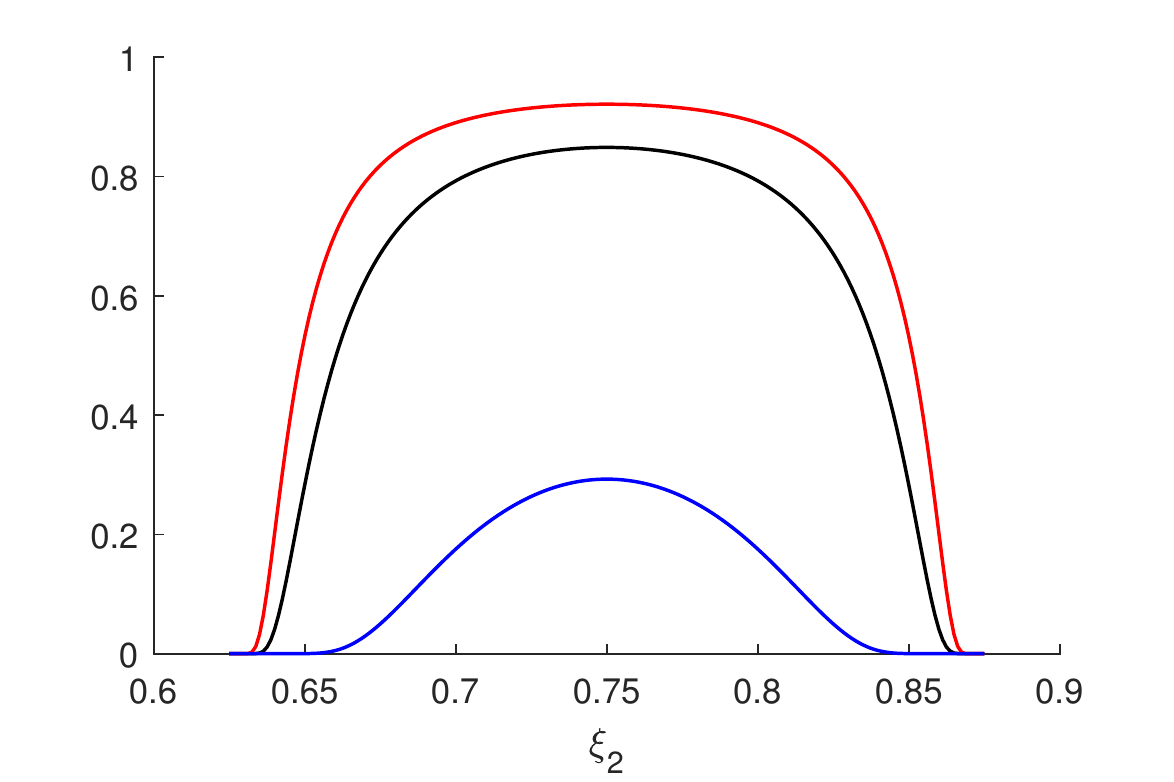}
	\end{subfigure}
	\hfill
	\begin{subfigure}[t!]{0.48\textwidth}  
		\centering 
		\includegraphics[width=\textwidth]{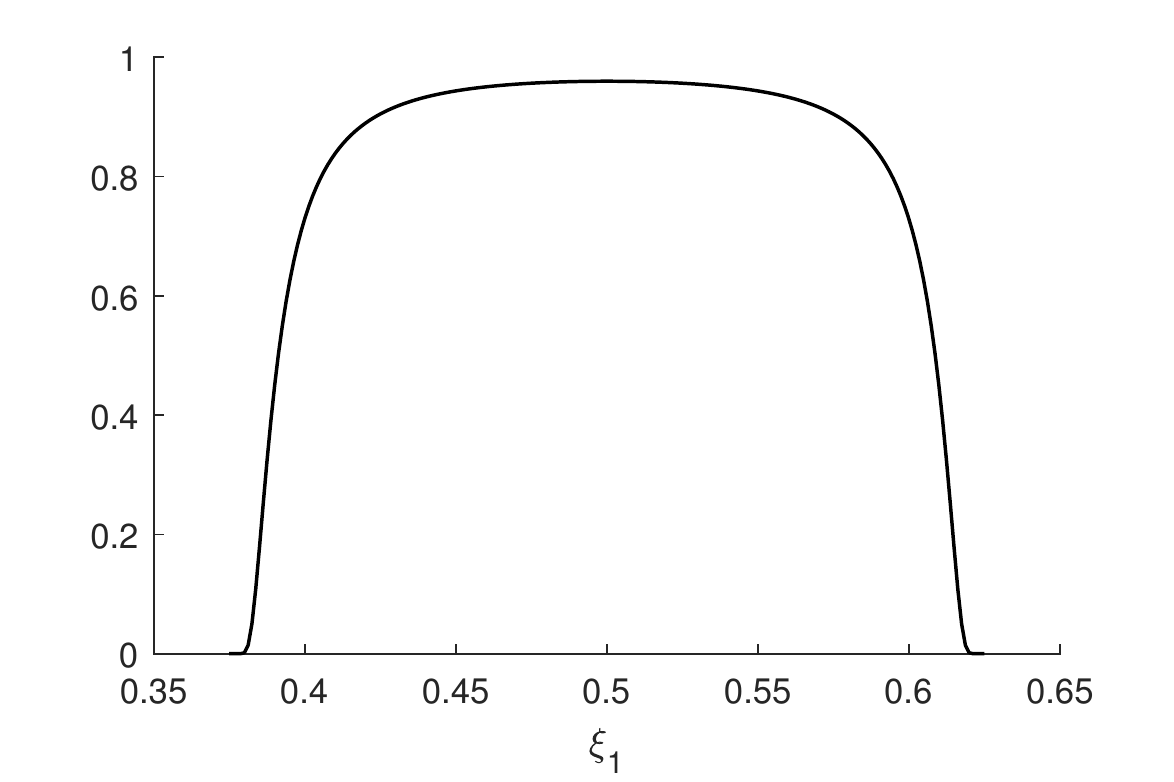}
	\end{subfigure}
	\captionsetup{justification=centering}
	\caption{Control and disturbance shape functions. On the left $b_{v}$ (black), $b_{\theta_I}$ (red) and $b_{dv}$ (blue), and on the right $b_{\theta_H}$.}
	\label{fig:profiles}
\end{figure*}

We consider three observations on the linearized Boussinesq equations \eqref{Eq:lintransbous}. These are given by
\begin{align*}
y_{b1}(t) &= \frac{1}{|\Omega_\theta|}\int_{\Omega_\theta}\theta(\xi,t)d\xi, \qquad y_{b2}(t) = \frac{1}{|\Gamma_O|}\int_{\Gamma_O} \theta(\xi,t)d\xi_2, \\
y_{b3}(t) &= \frac{1}{|\Omega_v|}\int_{\Omega_v} v_{\xi_1}(\xi,t)d\xi,
\end{align*}
where $\Omega_\theta = [\frac{1}{8}, \ \frac{2}{8}] \times [\frac{5}{8}, \ \frac{6}{8}]$ and $\Omega_v = [\frac{3}{8}, \ \frac{4}{8}] \times [\frac{2}{8}, \ \frac{3}{8}]$, and we set $y_b = [y_{b1},\ y_{b2}, \ y_{b3}]^T$. The actuator \eqref{Eq:actuator} and the sensor \eqref{Eq:sensor} are characterized by the simple choices
\begin{equation}\label{Eq:ASpicks}
	A_a = A_s = -I_3, \qquad B_a = C_a = B_s = C_s = I_3,
\end{equation}
and the reference signals to be tracked are
\begin{align*}
y_{ref1}(t) &= -1 + \sin(t) + 0.3\cos(2t), \qquad y_{ref2}(t) = 0.5\cos(0.5t), \\
y_{ref3}(t) &= 1+ 0.5\sin(2t),
\end{align*}
respectively. Finally, the disturbance signal is given by $u_{d}(t) = 2\sin(0.5t)$.

For the simulations, we use a uniform triangulation of $\Omega$ together with Taylor-Hood finite element spatial discretization for the Navier--Stokes part of the Boussinesq equations and quadratic elements with the same triangulation for the advection--diffusion part. The incompressibility condition $\nabla \cdot v = 0$ is relaxed by using a penalty method to decouple the pressure term from the velocity, see e.g. \cite[Ch. 5.2]{Gunzburger1989} or \cite{He2018}, with the penalty parameter $\epsilon_p = 10^{-5}$. To approximate the infinite-dimensional system $(A_b,B_b,C_b)$, we use the mesh size $h_{inf}=1/24$, which results in approximation order $N_{inf} = 6728$ for the system $(A,B,C)$ after accounting for the boundary conditions.

We use Newton's method to calculate a steady state solution for the Boussinesq equations \eqref{Eq:Bouss} subject to
\begin{align*}
f_T(\xi) &= 5\sin(2\pi \xi_1)\cos(2 \pi \xi_2), \\
f_w(\xi) &= 4\begin{bmatrix}
\sin(2\pi\xi_1)\cos(2\pi\xi_2), & -\cos(2\pi\xi_1)\sin(2\pi\xi_2)
\end{bmatrix}^T.
\end{align*}
The steady state solution may be, and according to numerical tests is, non-unique, and we choose the steady state $(w_{ss},T_{ss})$ corresponding to the initial guess given by the steady state solution $(w_{i},T_{i})$ of \eqref{Eq:Bouss} subject to
\begin{align*}
f_{T_i}(\xi) &= 4\sin(2\pi\xi_1)\cos(2\pi\xi_2), \\
f_{w_i}(\xi) &= 2\begin{bmatrix}
\sin(2\pi\xi_1)\cos(2\pi\xi_2), & -\cos(2\pi\xi_1)\sin(2\pi\xi_2)
\end{bmatrix}^T.
\end{align*}
The steady state solution is depicted in Figure \ref{fig:steadys}.
\begin{figure*}[h]
	\centering
	\begin{subfigure}[t]{0.48\textwidth}
		\centering
		\includegraphics[width=\textwidth]{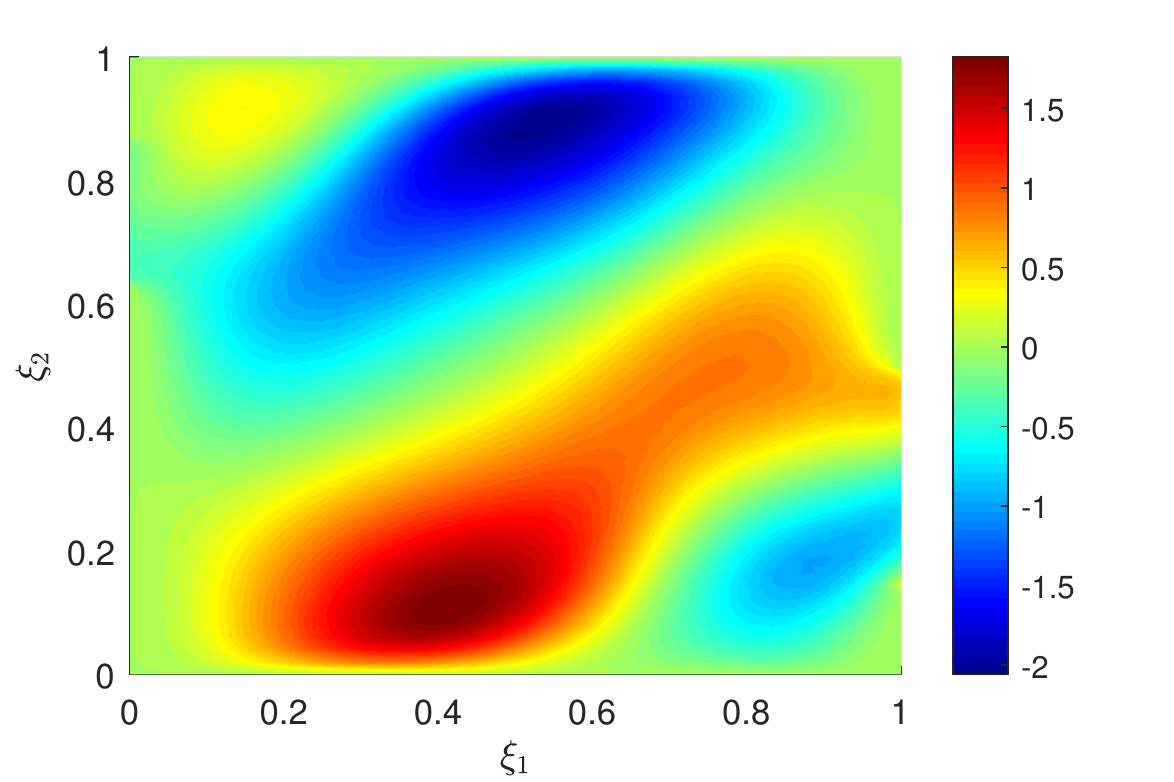}
		\captionsetup{justification=centering,font=normalsize}
		\caption[]%
		{The steady state velocity $w_{ss_1}$}    
		\label{fig:steadysxvel}
	\end{subfigure}
	\hfill
	\begin{subfigure}[t]{0.48\textwidth}  
		\centering 
		\includegraphics[width=\textwidth]{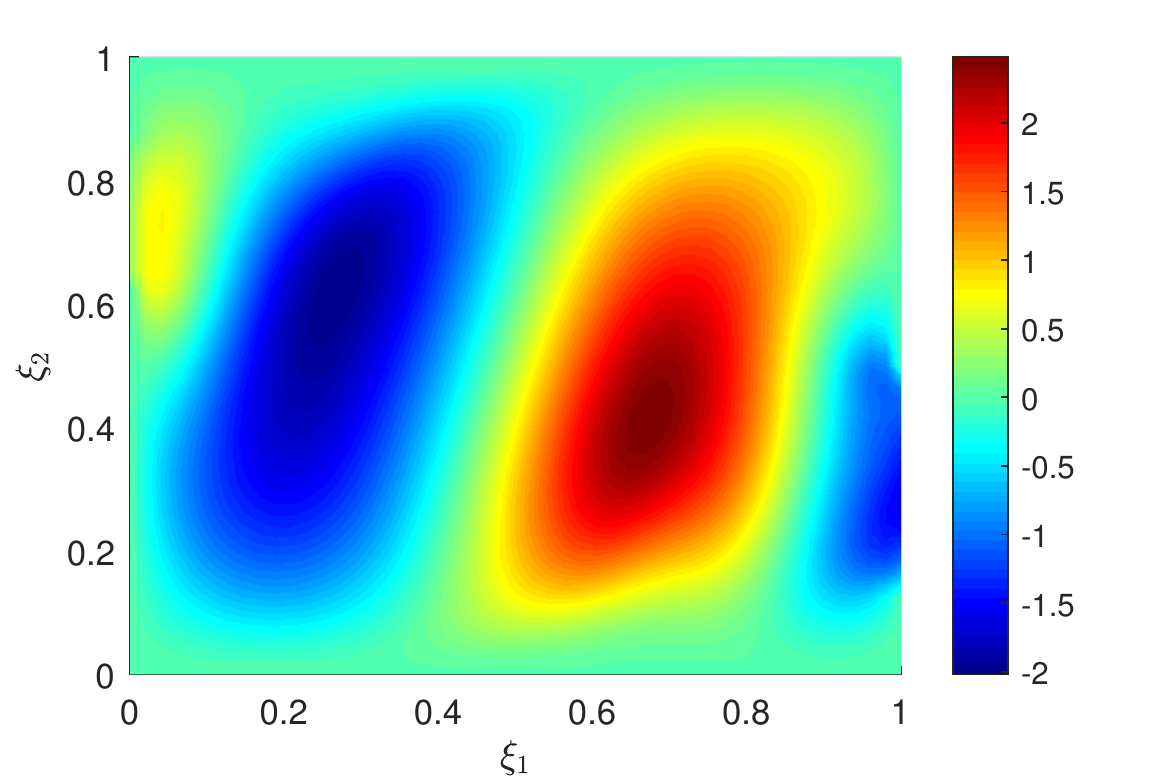}
		\captionsetup{justification=centering,font=normalsize}
		\caption[]%
		{ The steady state velocity $w_{ss_2}$}    
		\label{fig:steadysyvel}
	\end{subfigure}
	\vskip\baselineskip
	\begin{subfigure}[t]{0.48\textwidth}   
		\centering 
		\includegraphics[width=\textwidth]{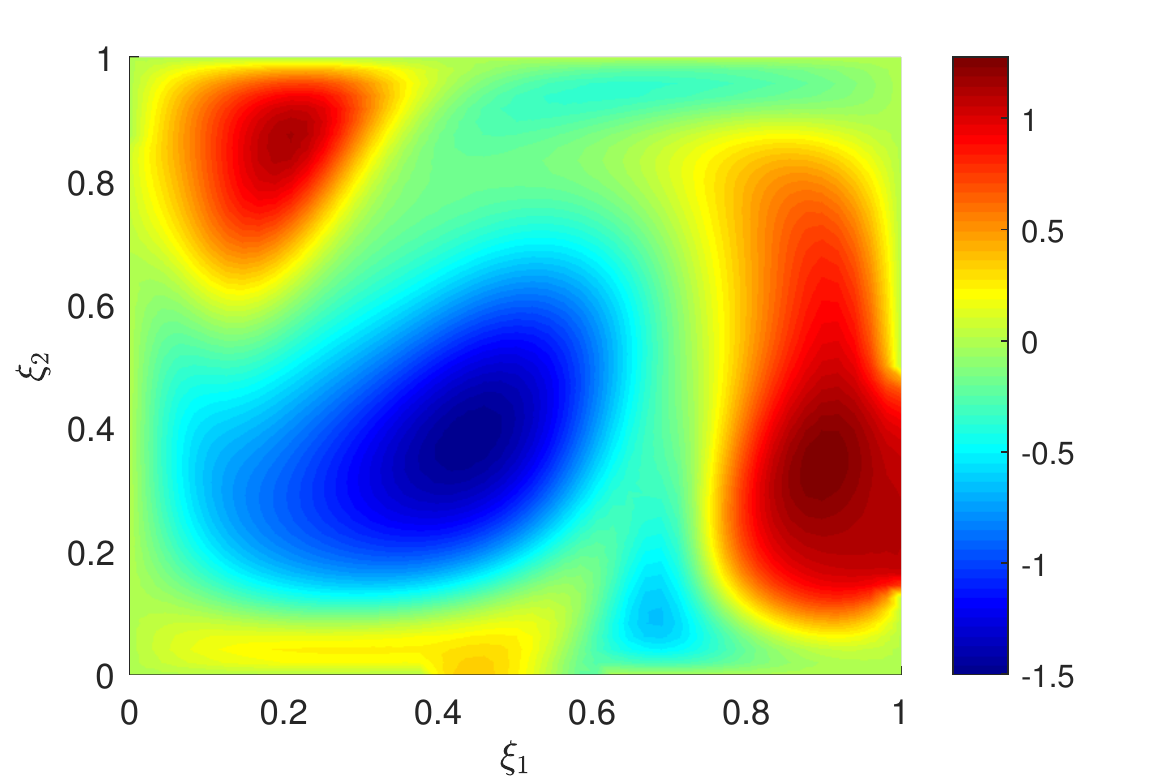}
		\captionsetup{justification=centering,font=normalsize}
		\caption[]%
		{ The steady state temperature $T_{ss}$}    
		\label{fig:steadysth}
	\end{subfigure}
	\quad
	\begin{subfigure}[t]{0.48\textwidth}   
		\centering 
		\includegraphics[width=\textwidth]{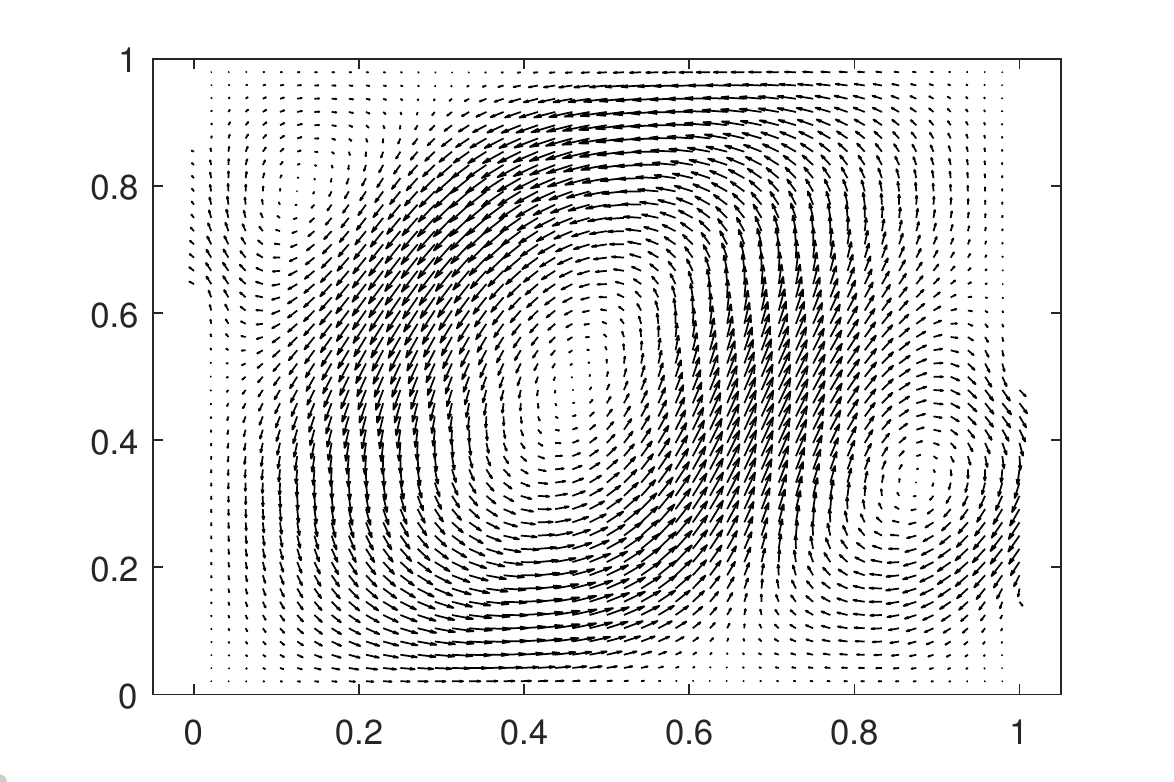}
		\captionsetup{justification=centering,font=normalsize}
		\caption[]%
		{ The  steady state velocity field $w_{ss}$}    
		\label{fig:steadysqvel}
	\end{subfigure}
	\captionsetup{justification=centering,font=normalsize}
	\caption[]
	{ A steady state solution $(w_{ss_1},w_{ss_2},T_{ss})$ of the Boussinesq equations \eqref{Eq:Bouss}} 
	\label{fig:steadys}
\end{figure*}
We observe numerically that for the calculated steady state solution $A_b$, thus also $A$ due to the block triangular structure if rearranged according to the state $(x_a,  x_b ,  x_s)$, has a single pair of unstable eigenvalues
\begin{equation*}
\lambda_{\pm} \approx 0.0621 \pm 0.4908i.
\end{equation*}
Exponential stabilizability, exponential detectability and Assumption \ref{ass:invariant} are checked numerically for the system $(A_b,B_b,C_b)$, and for the choice \eqref{Eq:ASpicks} they are transferred to the system $(A,B,C)$ by Lemmata \ref{lemma:isdet}, \ref{lemma:isstab} and \ref{lemma:invariant}.

For the controller construction, we use a coarser linearized Boussinesq equations approximation with the mesh size $h = 1/16$. Using the penalty method would introduce additional modeling error not compatible with Assumption \ref{ass:approximation}, so we base the plant approximation $(A^N,B^N,C^N)$ on the discretized Leray projector instead, see \cite{Heikenschloss2008,Benner2015,Bansch2015}. The discretized Leray projector is used merely as a theoretical tool, and the Riccati equations together with the model-reduced operators $A_L^r,B_L^r,K_2^r,L^r$ are instead solved using the differential-algebraic equation form
of the Taylor-Hood-quadratic discretized cascade system. More specifically, we obtain solutions of the Riccati equations using the Generalized Low-Rank Cholesky Factor Newton Method, see Algorithm 2 in \cite{Bansch2015}, with the initial stabilizing feedback solved by the Matlab's \texttt{icare} function using the penalized Taylor-Hood-quadratic discretization and the ADI shift parameters solved using the \texttt{LYAPACK} toolbox for Matlab. Finally, \texttt{balred} function of Matlab is utilized for the order reduction. Parameter's of the Riccati equations are chosen as
\begin{align*}
\alpha_1 = 0.3, \quad \alpha_2 = 0.2, \quad R_1=R_2=I_3, \quad Q_1Q_1^* = I_{X}, \quad Q_c^*Q_c = I_{ Z_{im} \times X}.
\end{align*}
To track $y_{ref} = [y_{ref1}, \ y_{ref2}, \ y_{ref3}]^T$ while rejecting $u_{d}$, the internal model of the controller has $4$ frequencies. Due to $y_{ref}$ and $u_d$ having constant coefficients, $\dim Z_{im} = 3+3 \cdot 3 \cdot 2=21$, thus us using the reduction order $r=20$ results in the controller order $\dim Z = \dim Z_{im} + r = 41$.

For the simulations, we choose as the initial state of the closed-loop system
\begin{align*}
\begin{bmatrix}
v_0, & \theta_0, & x_{a0}, & x_{s0}, & z_0
\end{bmatrix}^T &= \begin{bmatrix}
w_{i}-w_{ss}, & T_{i}-T_{ss}, & 0, & 0, & 0
\end{bmatrix}^T \\
&\quad \in X_v \times L^2(\Omega) \times \mathbb{R}^{3} \times \mathbb{R}^{3} \times Z
\end{align*}
and the state components $v_0$ and $\theta_0$ are shown in Figure \ref{fig:is}.
\begin{figure*}[h!]
	\centering
	\begin{subfigure}[t]{0.48\textwidth}
		\centering
		\includegraphics[width=\textwidth]{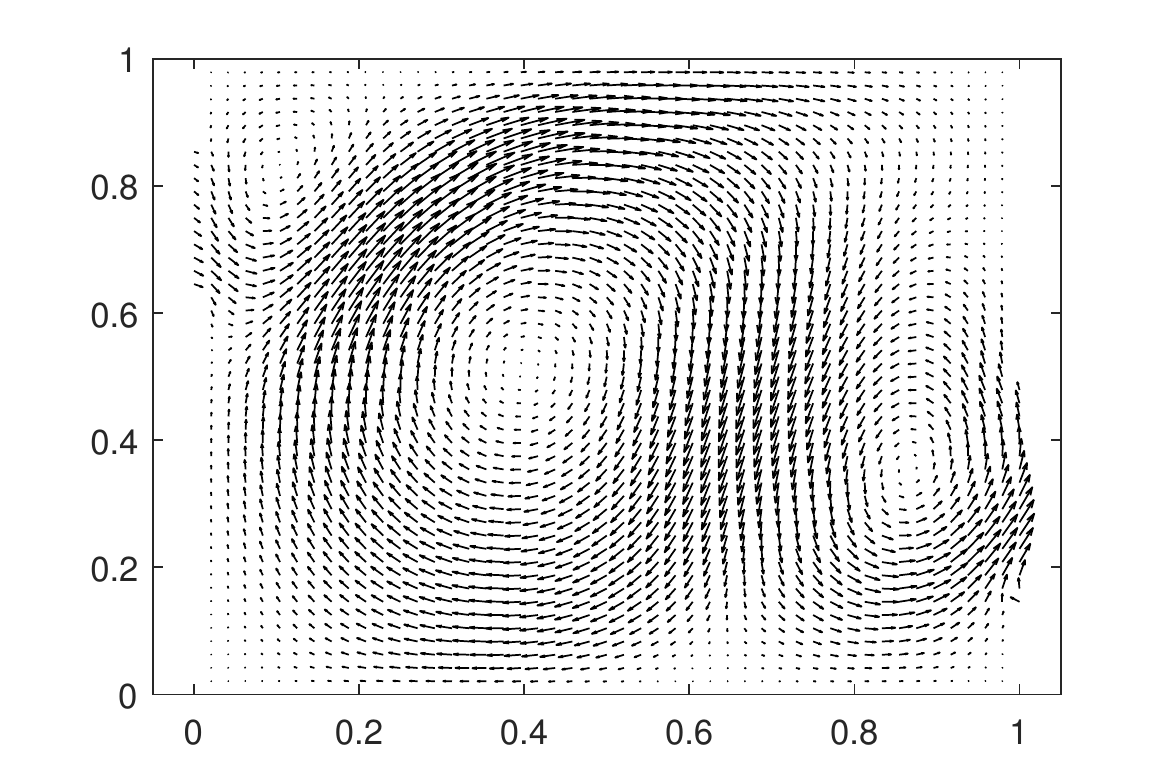}
		\captionsetup{justification=centering,font=normalsize}
		\caption[]%
		{$v_0$}    
		\label{fig:isvel}
	\end{subfigure}
	\hfill
	\begin{subfigure}[t]{0.48\textwidth}  
		\centering 
		\includegraphics[width=\textwidth]{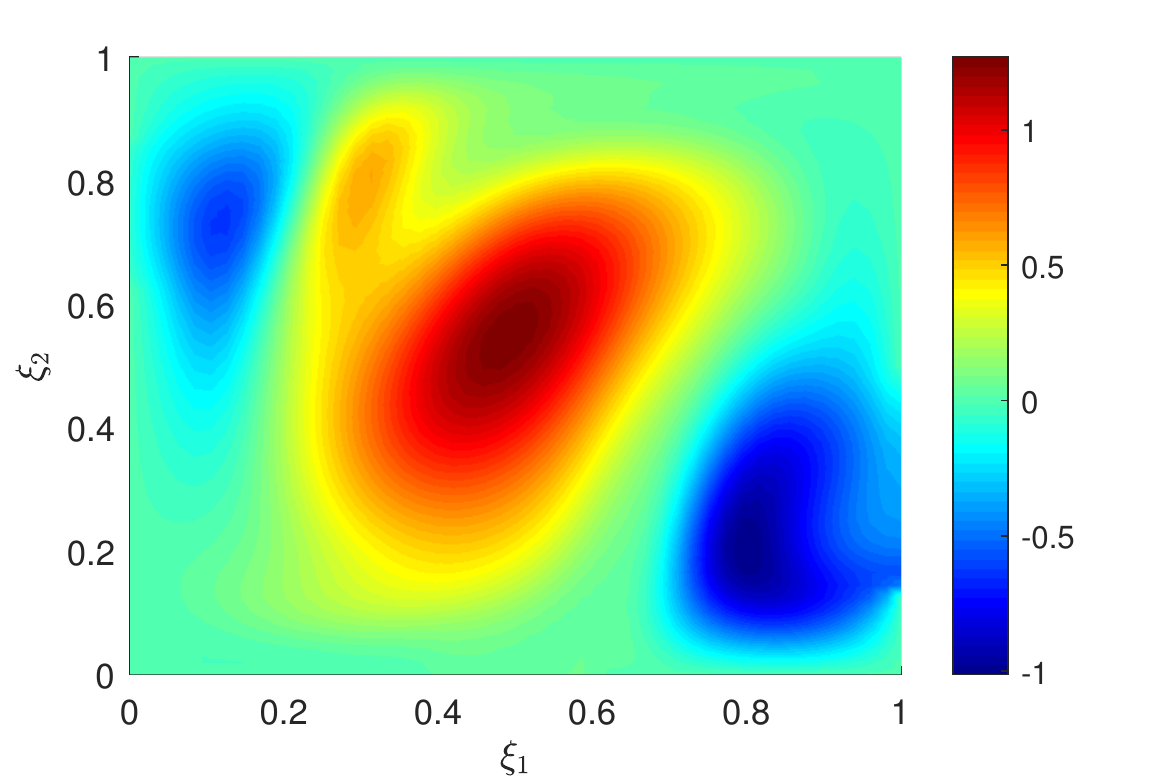}
		\captionsetup{justification=centering,font=normalsize}
		\caption[]%
		{$\theta_0$}   
		\label{fig:isth}
	\end{subfigure}
	\captionsetup{justification=centering,font=normalsize}
	\caption[]
	{ The initial state $(v_0,\theta_0)$}
	\label{fig:is}
\end{figure*}

Output tracking performance of the controller for $t \in [0,50]$ is illustrated in Figure \ref{fig:track}.
\begin{figure}[h]
	\begin{center}
		\centering
		\captionsetup{justification=centering,font=normalsize}
		\includegraphics[width=0.6\textwidth]{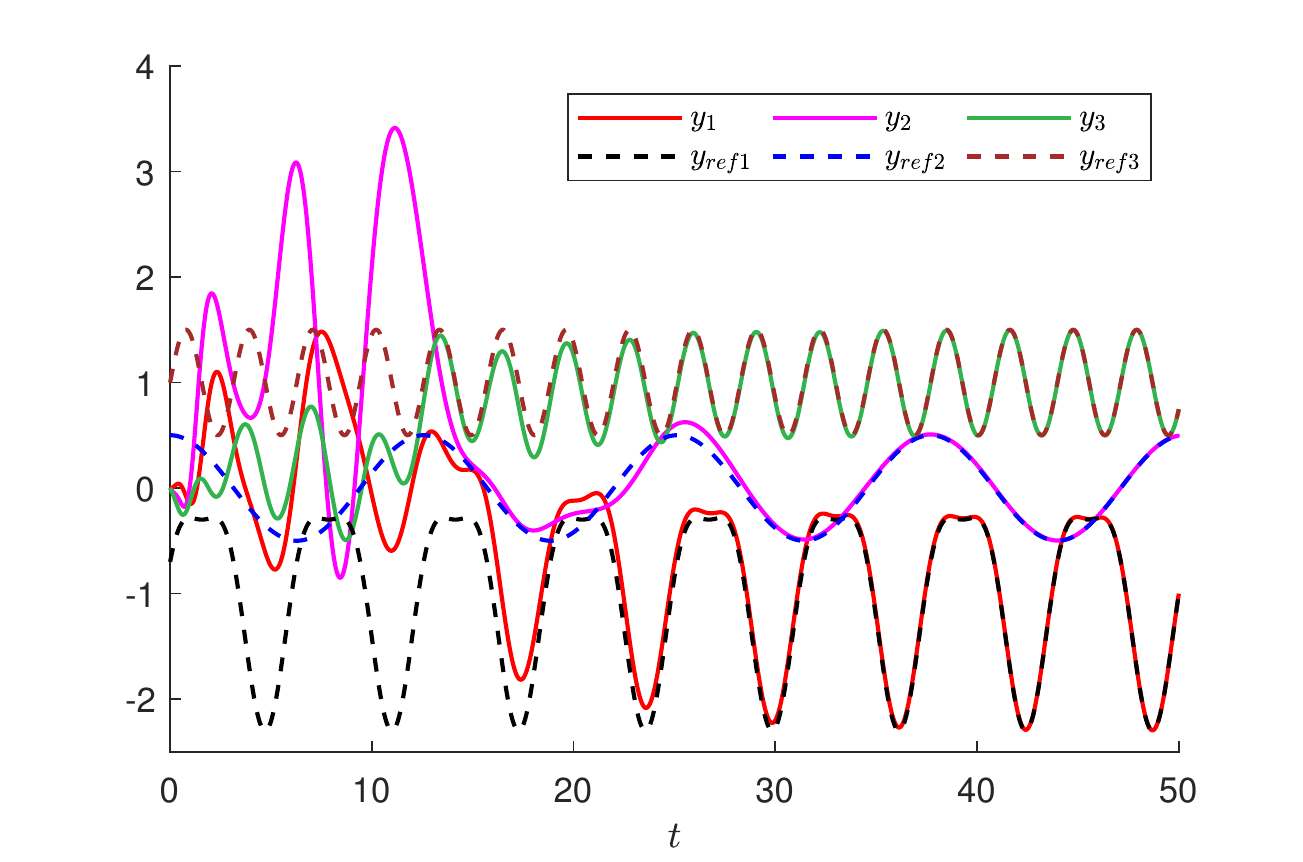}
		\caption{The system output $y = [y_1, \ y_2, \ y_3]^T$ and the reference output $y_{ref} = [y_{ref1}, \ y_{ref2}, \ y_{ref3}]^T$}
		\label{fig:track}
	\end{center}
\end{figure}
The system output converges to the reference output with accurate tracking for $t > 30$. Initial oscillations of the observation are reasonable, but the state $(v,\theta)$ of the linearized translated Boussinesq equations experiences significant oscillations. The temperature values near the controlled strips $\Gamma_I$ and $\Gamma_H$ are a prime example of this behavior, as Figure \ref{fig:state50s} suggests, while incompressibility leads to ``less localized'' velocity state.
\begin{figure*}[h!]
	\centering
	\begin{subfigure}[t]{0.48\textwidth}
		\centering
		\includegraphics[width=\textwidth]{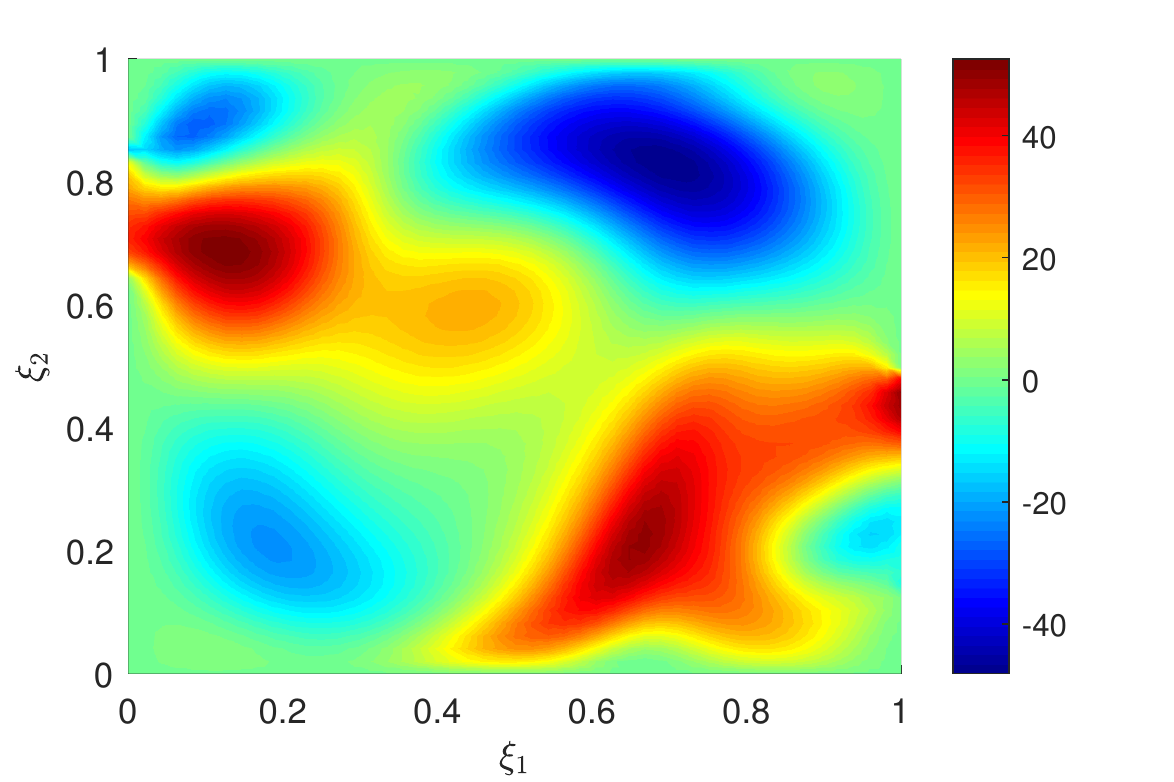}
		\captionsetup{justification=centering,font=normalsize}
		\caption[]%
		{ The velocity $v_{1}(\xi,50)$}    
		\label{fig:state50sxvel}
	\end{subfigure}
	\hfill
	\begin{subfigure}[t]{0.48\textwidth}  
		\centering 
		\includegraphics[width=\textwidth]{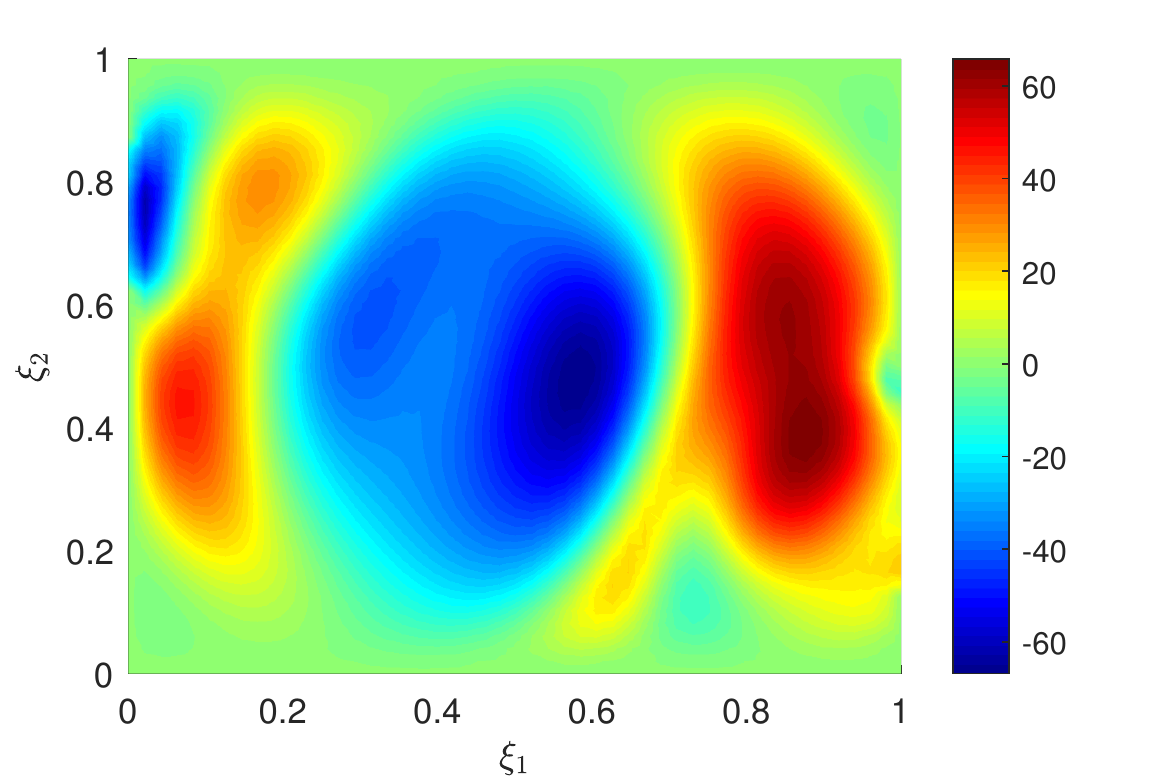}
		\captionsetup{justification=centering,font=normalsize}
		\caption[]
		{ The velocity $v_{2}(\xi,50)$}  
		\label{fig:state50syvel}
	\end{subfigure}
	\vskip\baselineskip
	\begin{subfigure}[t]{0.48\textwidth}   
		\centering 
		\includegraphics[width=\textwidth]{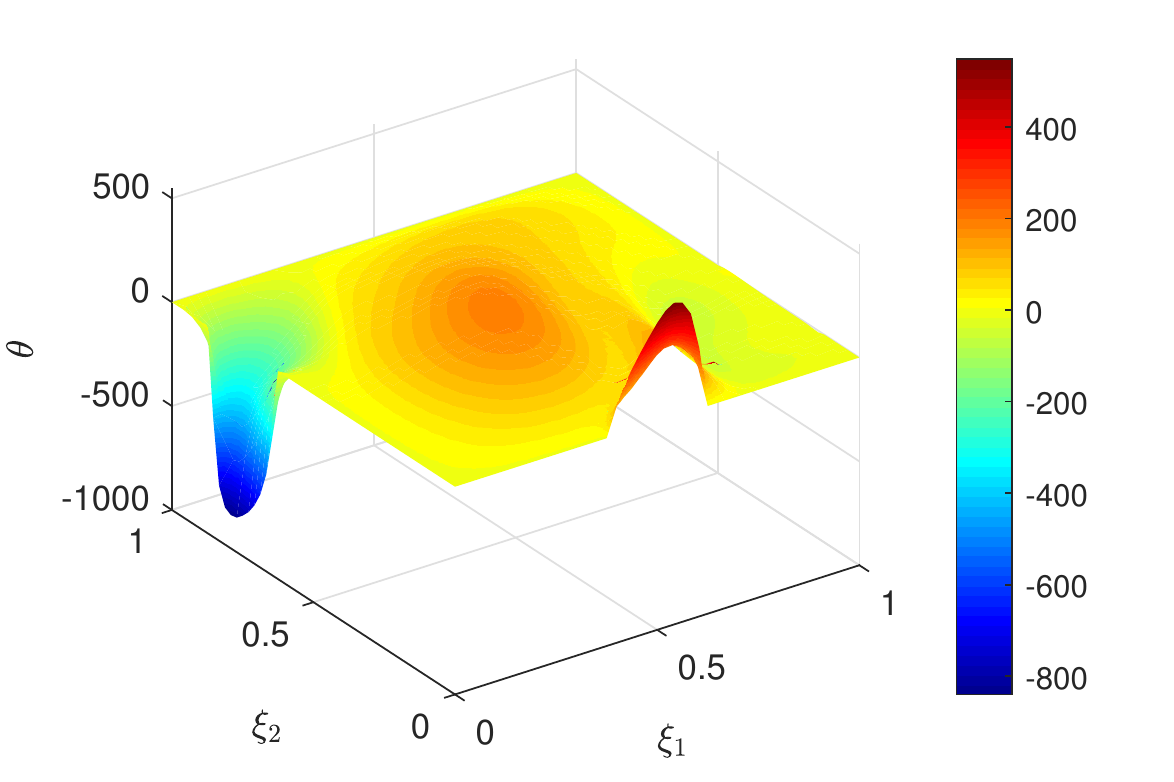}
		\captionsetup{justification=centering,font=normalsize}
		\caption[]
		{ The temperature $\theta(\xi,50)$}    
		\label{fig:state50stheta}
	\end{subfigure}
	\quad
	\begin{subfigure}[t]{0.48\textwidth}   
		\centering 
		\includegraphics[width=\textwidth]{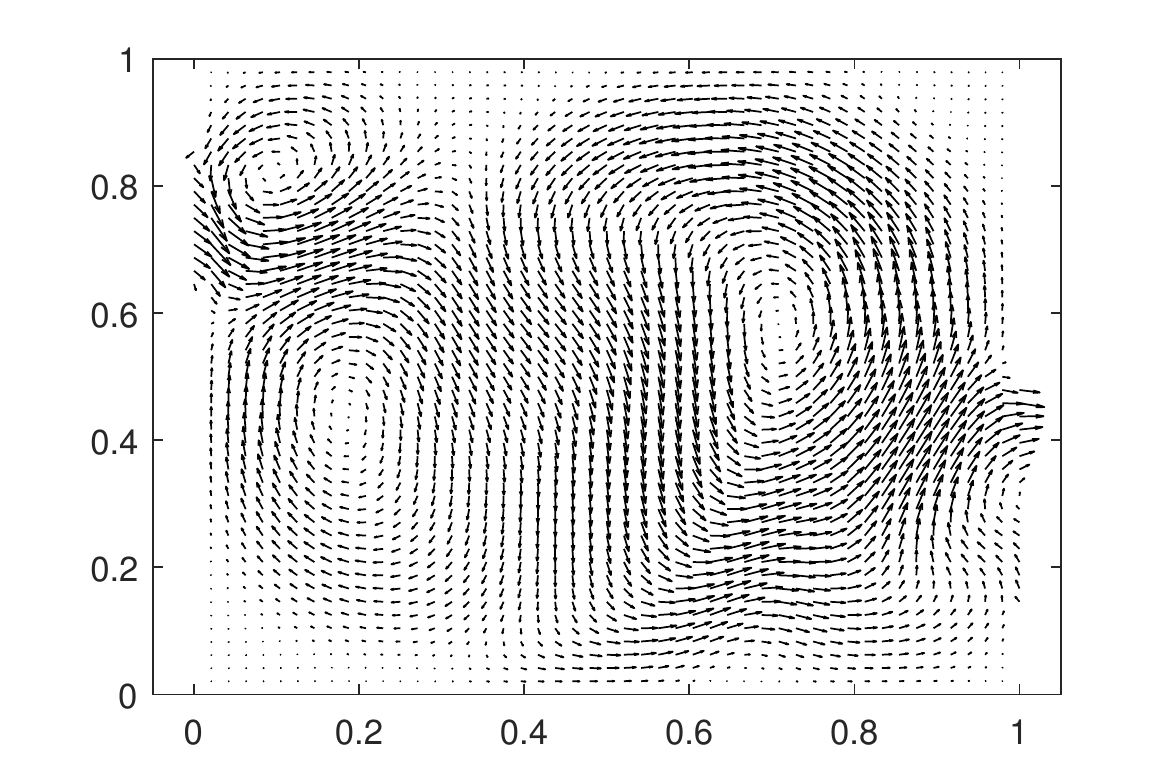}
		\captionsetup{justification=centering,font=normalsize}
		\caption[]
		{ The velocity field $v(\xi,50)$}   
		\label{fig:state50svel}
	\end{subfigure}
	\captionsetup{justification=centering,font=normalsize}
	\caption[]
	{ State $(v_1,v_2,\theta)$ of the linearized translated Boussinesq equations at the time $t=50$} 
	\label{fig:state50s}
\end{figure*}
The controlled strips actually maintain large amplitudes for both the velocity $v$ and the temperature $\theta$ throughout the simulation disregarding the initial transient, as is evident from Figure \ref{fig:Strips}.
\begin{figure*}[h]
	\centering
	\begin{subfigure}[t]{0.48\textwidth}
		\centering
		\includegraphics[width=\textwidth]{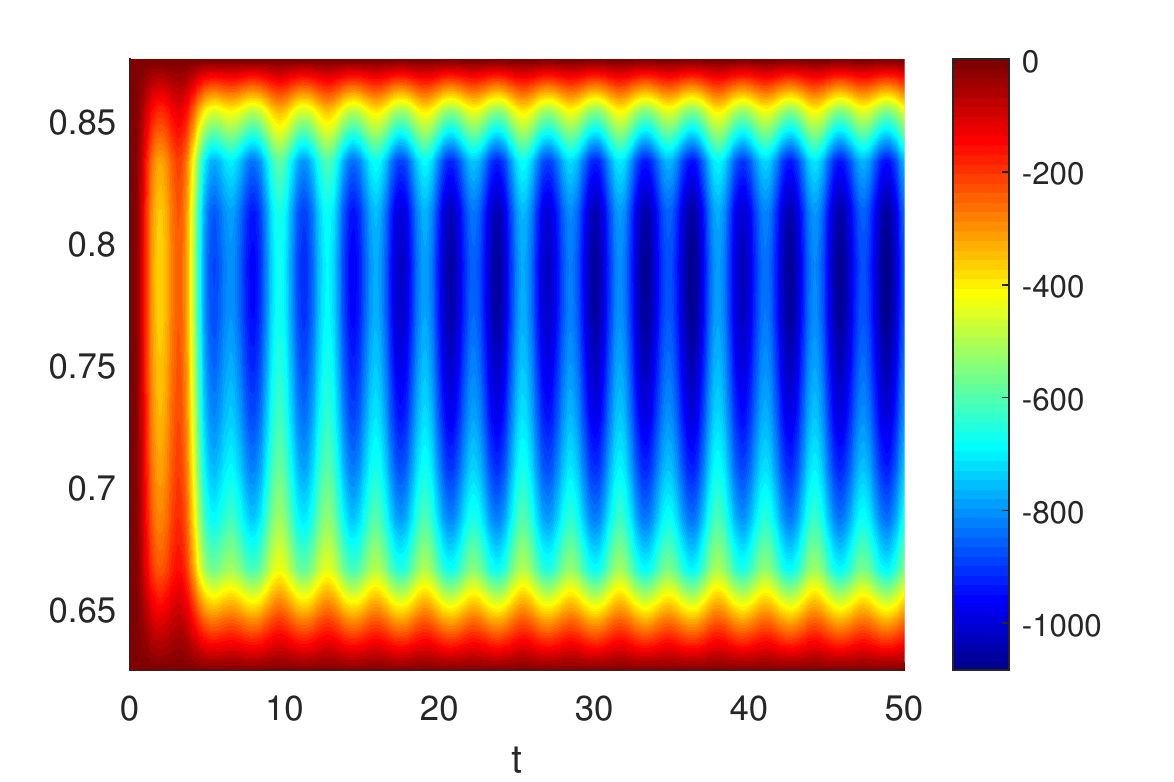}
		\captionsetup{justification=centering,font=normalsize}
		\caption[]
		{ The temperature profile within $\Gamma_I$}    
		\label{fig:Inlettemp}
	\end{subfigure}
	\hfill
	\begin{subfigure}[t]{0.48\textwidth}  
		\centering 
		\includegraphics[width=\textwidth]{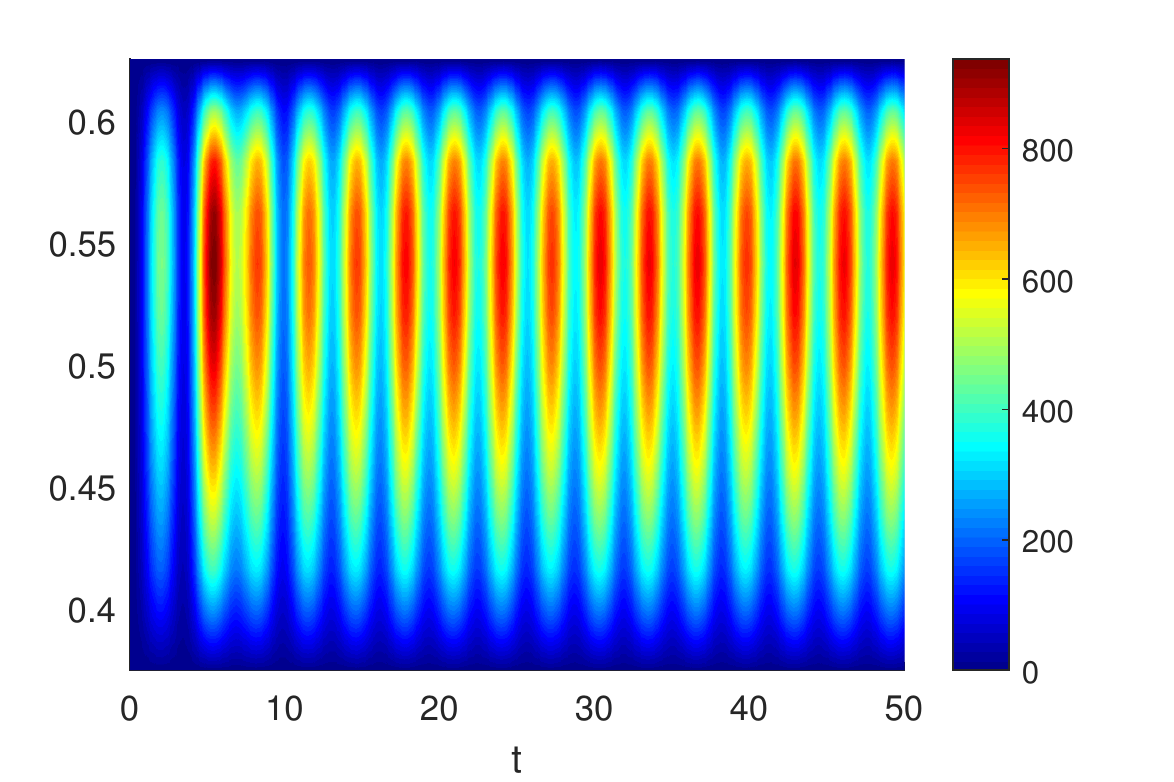}
		\captionsetup{justification=centering,font=normalsize}
		\caption[]
		{ The temperature profile within $\Gamma_H$}  
		\label{fig:Striptemp}
	\end{subfigure}
	\vskip\baselineskip
	\begin{subfigure}[t]{0.48\textwidth}   
		\centering 
		\includegraphics[width=\textwidth]{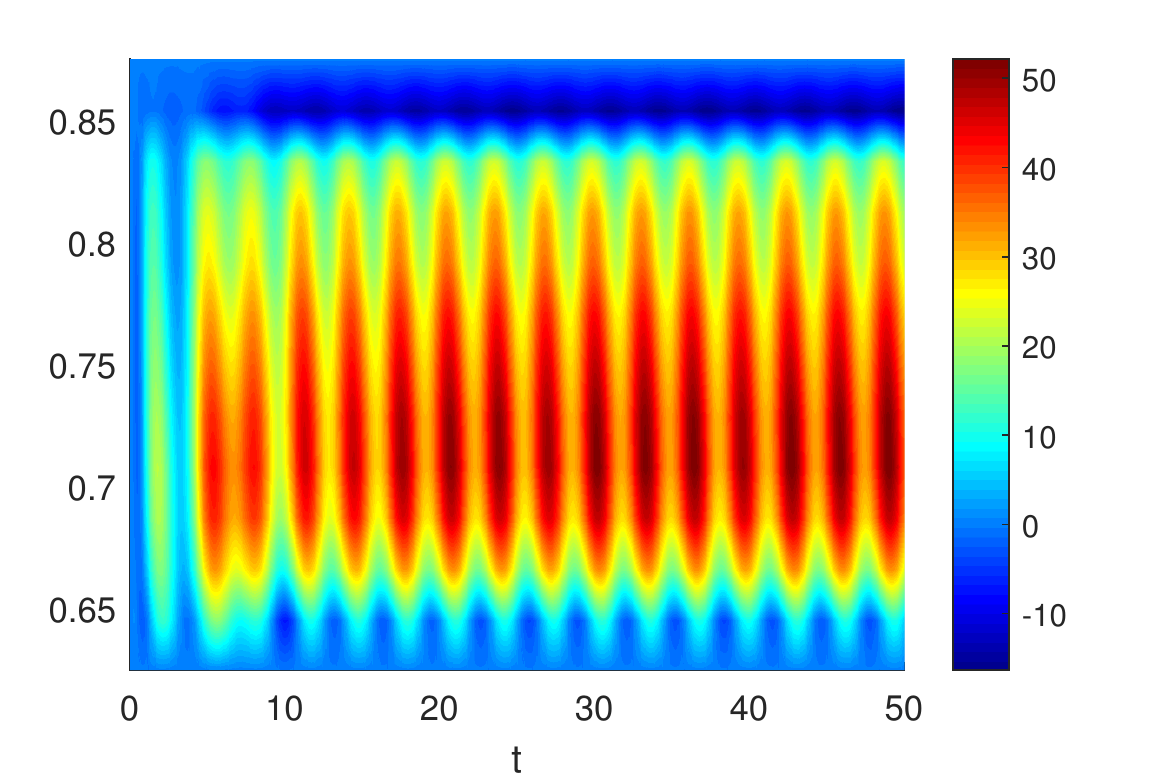}
		\captionsetup{justification=centering,font=normalsize}
		\caption[]%
		{ The $\xi_1$-velocity profile within $\Gamma_I$}    
		\label{fig:Inletxvel}
	\end{subfigure}
	\quad
	\begin{subfigure}[t]{0.48\textwidth}   
		\centering 
		\includegraphics[width=\textwidth]{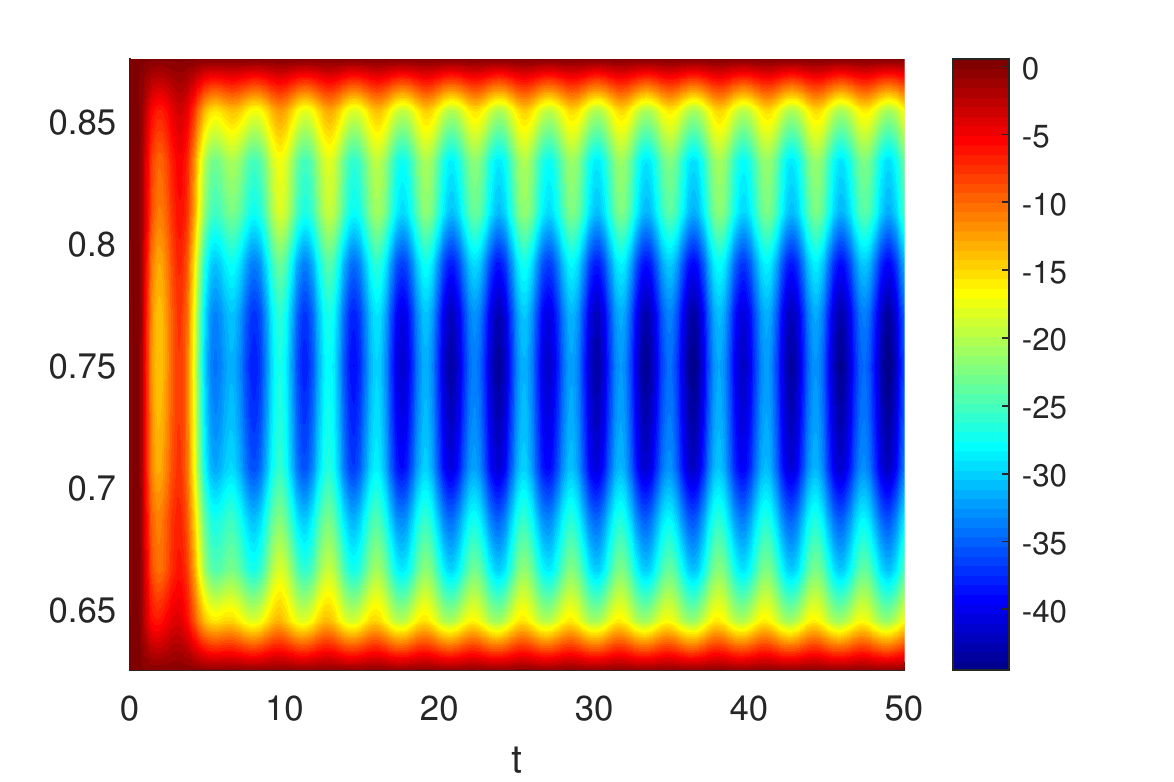}
		\captionsetup{justification=centering,font=normalsize}
		\caption[]%
		{ The $\xi_2$-velocity profile within $\Gamma_I$}    
		\label{fig:Inletyvel}
	\end{subfigure}
	\captionsetup{justification=centering}
	\caption[]
	{ State components within the controlled strips for $t \in [0,50]$} 
	\label{fig:Strips}
\end{figure*}
Similarly the boundary inputs to the room do not change sign after the transient, see Figure \ref{fig:Inputs}, although the plant input component $u_3$ just barely does.

\begin{figure*}[h]
	\centering
	\begin{subfigure}[t]{0.48\textwidth}
		\centering
		\includegraphics[width=\textwidth]{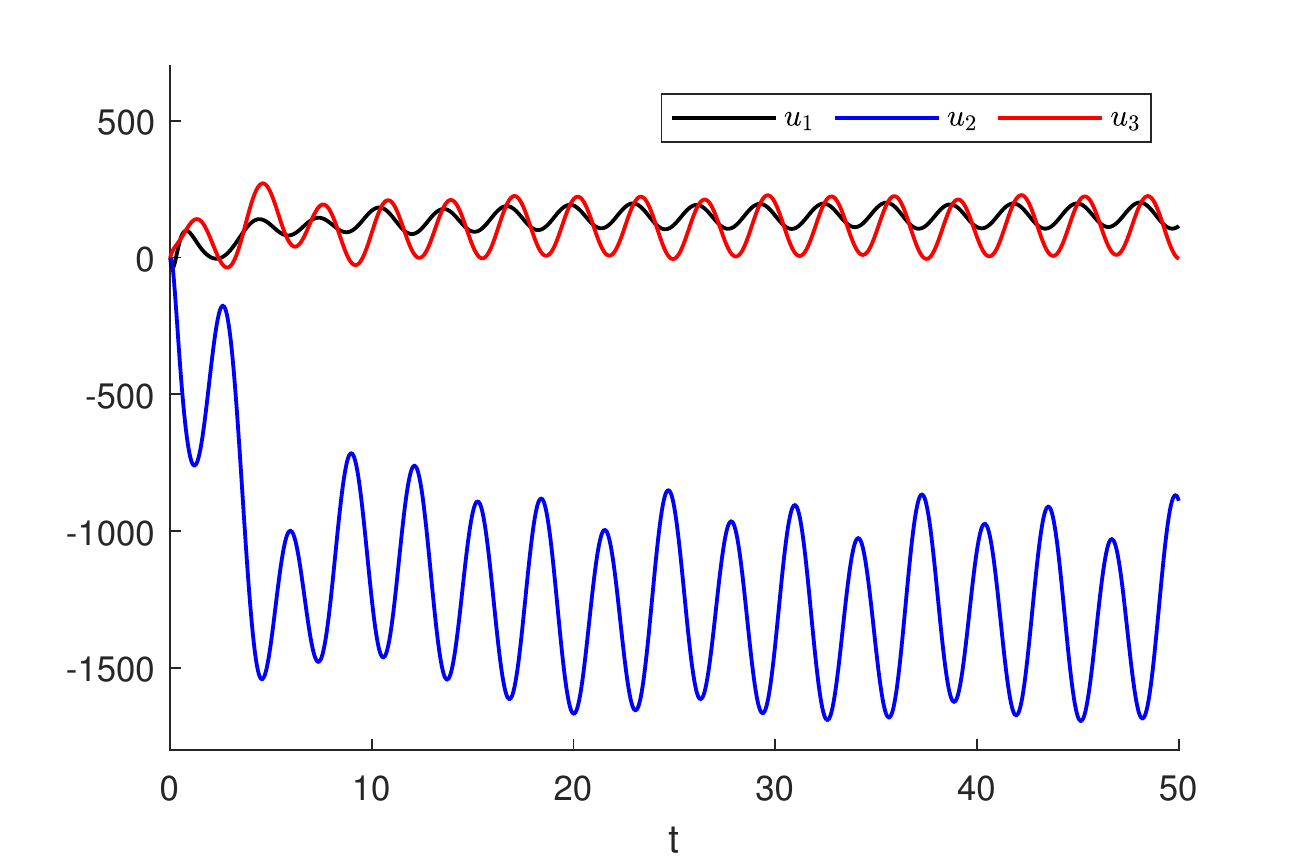}
		\captionsetup{justification=centering,font=normalsize}
		\caption[]
		{ The plant input $u = [u_1, \ u_2, \ u_3]^T$}    
		\label{fig:Input}
	\end{subfigure}
	\hfill
	\begin{subfigure}[t]{0.48\textwidth}  
		\centering 
		\includegraphics[width=\textwidth]{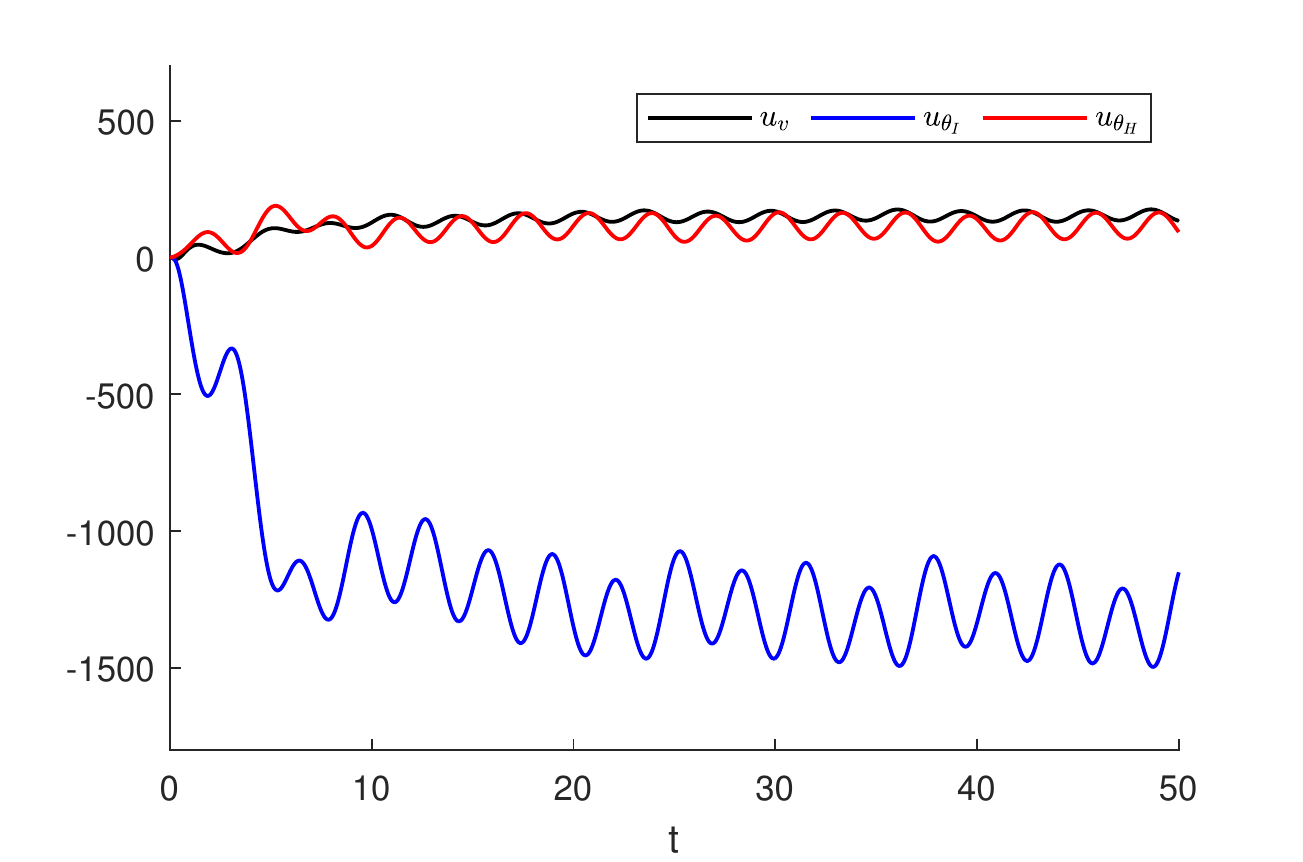}
		\captionsetup{justification=centering,font=normalsize}
		\caption[]
		{ The boundary input $u_b = [u_v, \ u_{\theta_I}, \ u_{\theta_H}]^T$}  
		\label{fig:BCSinput}
	\end{subfigure}
	\captionsetup{justification=centering}
	\caption[]
	{ Plant input $u$ and the corresponding boundary input $u_b$ generated by the actuator}
	\label{fig:Inputs}
\end{figure*}

\section{Conclusion}\label{Sec:conclusion}

We studied robust temperature and velocity output tracking of a two-dimensional room model with the fluid dynamics governed by the linearized Boussinesq equations. For the room model, we considered the natural case of the control applied on the fluid via the boundary and the observations performed on the fluid at the boundary. Related to the control and observation operations, we modeled actuator and sensor dynamics of the system. These additional models lead to more complex system dynamics while improving mathematical properties of the control and observation operators and possibly improving the model's accuracy. We examined effects of the added actuator and sensor dynamics on the system properties such as exponential stabilizability and exponential detectability and implemented an internal model based error feedback controller design for robust temperature and velocity output tracking for the room model.
In addition to being robust, the controller is suitable for unstable systems, requires only output information instead of full state information and is of low order for efficient applicability.

As an example, we illustrated robust output tracking of the linearized Boussinesq equations with actuator and sensor dynamics in the case of three boundary controls, a mix of one boundary and two in-domain observations and a boundary disturbance, each affecting either a velocity or the temperature component of the system. The controller achieved accurate tracking with relatively small transient observation oscillation, although the system state reaches large absolute values locally. Analogous approach of actuator and sensor modeling can be applied for robust output tracking of a class of linear systems with boundary control and observation.

\bibliographystyle{plain}

\end{document}